\documentclass[12pt]{article}
\usepackage[english]{babel}
\usepackage[latin1]{inputenc}
\usepackage[T1]{fontenc}
\usepackage{amsmath,amssymb,amsthm,xspace,enumerate,url}
\usepackage[hypertex]{hyperref}
\usepackage{svn}
\parindent7pt

\newcommand{\Ind}{
 \setbox0=\hbox{$x$}\kern\wd0\hbox to 0pt{\hss$
 \mid$\hss}\lower.9\ht0\hbox to 0pt{\hss$\smile$\hss}\kern\wd0
}
\newcommand{\indep}[3]{
 #1\mathop{\mathpalette\Ind{}}_{#2}#3
}

\newcommand{\Notind}{
 \setbox0=\hbox{$x$}\kern\wd0\hbox to 0pt{\mathchardef
 \nn=12854\hss$\nn$\kern1.4\wd0\hss}\hbox to 0pt{\hss$\mid$\hss}\lower.9\ht0
 \hbox to 0pt{\hss$\smile$\hss}\kern\wd0
}

\newcommand{\notind}[3]{
 #1\mathop{\mathpalette\Notind{}}_{#2}#3
}

\usepackage{srcltx}

\title{The free pseudospace is $n$-ample, but not $(n+1)$-ample
}

\author{Katrin Tent
}
\date{\today}

\newtheorem{satz}{Theorem}[section]
\newtheorem{theorem}{Theorem}[section]

\newtheorem{lemma}[satz]{Lemma}
\newtheorem{proposition}[satz]{Proposition}
\newtheorem{corollary}[satz]{Corollary}
\newtheorem{definition}[satz]{Definition}
\newtheorem{remark}[satz]{Remark}
\newtheorem{sideremark}[satz]{Side remark}

\newcommand{\nc}{\newcommand}
\nc{\sa}{semialgebraic\xspace}
\nc{\el}{elementary\xspace}
\nc{\low}{lower \el}
\nc{\inv}[1]{\frac{1}{#1}}
\nc{\G}{\Gamma}
\nc{\Np}{\N_{\scriptscriptstyle >0}}
\nc{\Z}{\mathbb{Z}}
\nc{\Q}{\mathbb{Q}}
\nc{\N}{\mathbb{N}}

\nc{\Rp}{\R_{\scriptscriptstyle >0}}
\nc{\C}{\mathbb{C}}
\nc{\F}{\ensuremath{\mathcal{F}}\xspace}
\nc{\K}{\mathcal{K}}
\nc{\U}{\mathbb{U}}
\nc{\fps}{free pseudospace\xspace}
\nc{\fpsn}{free pseudospace of dimension $n$\xspace}
\nc{\fpsk}{free pseudospace of dimension $n-1$\xspace}

\nc{\E}{\mathbb{E}}
\nc{\Epsilon}{{\Large $\epsilon$}} 
\nc{\ap}{approximable\xspace}
\nc{\e}{\mathrm{e}}
\nc{\ii}{\,\mathrm{i}}
\nc{\Es}{\E\setminus\R_{\scriptscriptstyle\leq0}}
\renewcommand{\o}{\omega}

\DeclareMathOperator{\MR}{MR}
\DeclareMathOperator{\proj}{proj}

\DeclareMathOperator{\qftp}{qftp}
\DeclareMathOperator{\tp}{tp}
\DeclareMathOperator{\cl}{acl}

\DeclareMathOperator{\dcl}{dcl}

\DeclareMathOperator{\Cb}{Cb}

\begin{document}

\maketitle
\begin{abstract}
We give a uniform construction of free pseudospaces of dimension $n$ extending work in \cite{BP}. This
yields examples of $\omega$-stable theories which are $n$-ample, but 
not $n+1$-ample. The prime models of these theories are buildings associated
to certain right-angled Coxeter groups.

\end{abstract}

\section{Introduction}

In the investigation of geometries on strongly minimal sets the notion of
ampleness plays an important role. Algebraically closed fields are $n$-ample for all $n$ and it is not known whether there are strongly minimal sets which are
$n$-ample for all $n$ and do not interpret an infinite field. Obviously, one 
way of proving that no infinite field is interpretable in a theory is by showing that the theory is \emph{not} $n$-ample for some $n$.

In \cite{BP}, Baudisch and Pillay constructed a free pseudospace of dimension~$2$. Its theory is $\o$-stable (of infinite rank) and $2$-ample. 
F. Wagner posed the question whether this example was $3$-ample or not.

In Section~\ref{sec:construction} we give a uniform construction of a free pseudospace of dimension $n$ and
show that it is $n$-ample, but not $n+1$-ample.  It turns out that the
theory of the free pseudospace of dimension $n$ is the first order theory
of a Tits-building associated to a certain Coxeter diagram and we will investigate this connection in Section~\ref{sec:building}.

In the final section we determine the orthogonality classes of regular types.

The construction given here is quite similar to the one given by Evans in \cite{evans} for a stable theory which is $n$-ample for all $n$, but does not
interpret an infinite group.
In contrast to the examples constructed by Evans, our theory is trivial and no infinite group is definable.
Baudisch, Pizarro and Ziegler informed me that they also showed that the examples in \cite{BP} are not $3$-ample.
In thank Anand Pillay for pointing out a mistake in an earlier version of
this paper and Linus Kramer, in particular for providing reference \cite{HP}.

\section{Construction and results}\label{sec:construction}
Fix a natural number $n\geq 1$. Let $L_n$ be the language for $n+1$-coloured graphs containing predicates $V_i,i=0,\ldots n$ and an edge relation $E$.
If $x\in V_i$ we also say that $x$ is of level $i$.

By an $L_n$-graph  we mean an $n+1$-coloured graph with vertices of types $V_i, i=0,\ldots n$ and
an edge relation $E\subseteq \bigcup_{i=1,\ldots n}V_{i-1}\times V_i$. We say that a path  in this graph is of type $E_i$ if all its vertices are in $V_{i-1}\cup V_i$ and of type $E_i\cup \ldots \cup E_{i+j}$ if all its
vertices are in $V_{i-1}\cup\ldots\cup V_{i+j}$

The free pseudospaces will be modeled along the lines of a projective space, i.e.
we will think of vertices of type $V_i$ as $i$-dimensional spaces in a \fps. Therefore we
extend the notion of incidence as follows:
\begin{definition}\label{d:incidence} \begin{enumerate}
 \item\label{d:incidence1} We say that a vertex $x_i$ of type $V_i$ is \emph{incident} to a vertex $x_j$ of type $V_j$
if there are  vertice $x_l$ of type $V_l, l=i+1\ldots j$ such that $E(x_{l-1},x_l)$ holds.
In this case the sequence $(x_i,\ldots x_j)$ is called a \emph{dense flag}.
A \emph{flag} is a sequence of vertices $(x_1,\ldots x_k)$ in which any two vertices
are incident.

\item The \emph{residue} $R(x)$ of a vertex $x$ is the set of vertices incident with $x$.

\item We say that two vertices $x$ and $y$ \emph{intersect in the vertex} $z$ and
write $z=x\wedge y$ if the
set of vertices of type $V_0$ incident with $x$ and $y$ is exactly the set of vertices
of type $V_0$ incident with $z$. If there is no vertex of type $V_0$ incident to $x$ and $y$, we say that $x$ and $y$ intersect in the empty set.

\item We say that two vertices $x$ and $y$ \emph{generate the vertex} $z$ and write $z=x\vee y$, if the
set of vertices of type $V_n$ incident with $x$ and $y$ is exactly the set of vertices
of type $V_n$ incident with $z$. If there is no vertex of type $V_n$ incident to $x$ and $y$, we say that $x$ and $y$ generate the empty set.
\item A \emph{simple cycle} is a cycle without repetitions.
\end{enumerate}
\end{definition}

We now give an inductive definition of a \fps of dimension $n$:

\begin{definition}
 A \fps of dimension $1$ is a free pseudoplane, i.e. an $L_1$-graph which does not contain any cycles and such that any vertex has infinitely many neighbours.

Assume that a free pseudospace of dimension $n-1$ has been defined.
Then a \fps of dimension $n$ is an $L_n$-graph such that the following holds:
\begin{enumerate}
\item[$(\Sigma 1)_n$]
\begin{enumerate}
 \item The set of vertices of type $V_0\cup \ldots \cup V_{n-1}$ is a \fpsk.
\item  The set of vertices of type $V_1\cup \ldots \cup V_n$ is a free pseudospace of dimension $(n-1)$.
\end{enumerate}
\item[$(\Sigma 2)_n$]
\begin{enumerate}
 \item For any vertex $x$ of type $V_0$, $R(x)$ is a free pseudospace of dimension $(n-1)$.
 \item  For any vertex $x$ of type $V_n$, $R(x)$ is a free pseudospace of dimension $(n-1)$.
\end{enumerate}
\item[$(\Sigma 3)_n$]
\begin{enumerate}
 \item Any two vertices $x$ and $y$ intersect in some vertex $z$  or the emptyset.
 \item Any two vertices $x$ and $y$ generate  some vertex $z$ or the emptyset.
\end{enumerate}
\item[$(\Sigma 4)_n$]
\begin{enumerate}
 \item If $a$ is a vertex of type $V_n$ and $\gamma=(a,b,\ldots , b',a)$ is a simple cycle of length $k$ not contained in $R(a)$, then there is an $E_{n-1}$-path from $b$ to $b'$ in $R(a)$   of length at most $k-1$. 
 \item If $a$ is a vertex of type $V_0$ and $\gamma=(a,b,\ldots , b',a)$ is a simple cycle of length $k$ not contained in $R(a)$, then there is an $E_2$-path from $b$ to $b'$ in $R(a)$   of length at most $k-1$. 
\end{enumerate}

\end{enumerate}\end{definition}

Let $T_n$ denote the $L_n$-theory expressing these axioms.

Note that the inductive nature of the definition immediately has the following
consequences:

\begin{enumerate}
\item The induced subgraph on $V_j\cup\ldots\cup V_{j+m}$ is a free pseudospace
of dimension $m$. 
\item If $a$ is a vertex of type $V_{j+m}$ and 
$\gamma=(a,b,\ldots , b',a)$ is a simple cycle of length $k$ contained in  $V_j\cup\ldots\cup V_{j+m}$, then there is an $E_{j+1}\cup\ldots \cup E_{j+m-2}$-path from $b$ to $b'$  of length at most $k-1$ in $R(a)$ all of whose $V_j$-vertices appear in $\gamma$.

\item The notion of a \fpsn is \emph{self-dual}: if we put $W_i=V_{n-i}, i=0,\ldots n$, then $W_0,\ldots W_n$  with the same set of edges is again a \fpsn.

\end{enumerate}

Our first goal is to show that $T_n$ is consistent and complete.

\begin{definition}
Let $\K_n$ be the class of finite $L_n$-graphs $A$ such that the following holds
\begin{enumerate}
\item $A$ does not contain any $E_i$-cycles for $i=1,\ldots n$.

\item If $a\neq a'$ are in $A$, they intersect in a vertex of $A$ or the emptyset.

\item If $a\neq a'$ are in $A$, they generate a vertex of $A$ or the emptyset.

\item If $(b,a,b')$ is a path with $a\in V_i,b,b'\in V_{i-1}$, and $\gamma=(a,b,\ldots , b',a)$ is a simple cycle of length $k$ not contained in $R(a)$, then there is some $E_{i-1}$-path from $b$ to $b'$  of length at most $k-1$ in $R(a)$.

\item If $(b,a,b')$ is a path with $a\in V_i,b,b'\in V_{i+1}$, and $\gamma=(a,b,\ldots , b',a)$ is a simple cycle of length $k$ not contained in $R(a)$, then there is some $E_{i+2}$-path from $b$ to $b'$  of length at most $k-1$ in $R(a)$.

\end{enumerate}

\end{definition}

\begin{definition} Let $A$ be a finite $L_n$-graph. The following extensions are called $1$-point strong extensions of $A$:
\begin{enumerate}
\item add a vertex of any type to $A$ which is connected
to at most one vertex of $A$ of an appropriate type.
\item If $(x,y,z)$ is a dense flag in $A$, add a vertex of the same  type as $y$ to $A$ which is connected to both $x$ and $z$. 
\end{enumerate}

We write $A\leq B$ if  $B$ arises from $A$  by finitely many $1$-point strong extensions.
\end{definition}

We next show that $(\K,\leq)$ has the amalgamation property for strong extensions. This will be enough to obtain a strong limit which is well-defined up to automorphism (see \cite{ziegler}).

For any finite $L_n$-graphs $A\subseteq B,C$ we denote by $B\otimes_AC$ the
\emph{free amalgam} of $B$ and $C$ over $A$, i.e. the graph on $B\cup C$ containing no edges between elements of
$B\setminus A$ and $C\setminus A$.

\begin{lemma} If  $A\leq B,C$ are in $\K_n$, then $D:=B\otimes_AC\in\K$ and
$B,C\leq D$.
\end{lemma}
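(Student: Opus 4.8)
There are two things to prove: that $D:=B\otimes_A C$ lies again in $\K_n$, and that $B,C\leq D$. I would dispose of the second first, as it is nearly formal. Fix a chain $A=C_0\leq C_1\leq\ldots\leq C_l=C$ of $1$-point strong extensions witnessing $A\leq C$ and perform the same sequence of extensions on top of $B$. In passing from $B\otimes_A C_j$ to $B\otimes_A C_{j+1}$ one adjoins a single vertex that, in $C_{j+1}$, is joined either to at most one vertex of $C_j$ or to the two ends $x,z$ of a dense flag $(x,y,z)$ of $C_j$; since a free amalgam has no edges between $B\setminus A$ and $C\setminus A$, that vertex is joined to exactly the same vertices in $B\otimes_A C_{j+1}$, and $(x,y,z)$ --- all of whose vertices lie in $C_j\subseteq B\otimes_A C_j$ --- is still a dense flag there. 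Hence every step is again a $1$-point strong extension, so $B=B\otimes_A C_0\leq\ldots\leq B\otimes_A C_l=D$, and symmetrically $C\leq D$.

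The first assertion rests on one structural fact about free amalgams over strong substructures, which I would isolate and prove at the outset, call it $(\star)$: because $A\leq B$, any simple path in $D$ with both endpoints in $A$ and all intermediate vertices in $B\setminus A$ is a dense flag, so in particular its two endpoints are incident already in $A$; dually for $C$. I would prove $(\star)$ by induction along a witnessing chain $A=B_0\leq\ldots\leq B_m=B$: a vertex adjoined by the first type of extension is a leaf (or isolated) relative to the current graph and so cannot be an interior vertex of any simple path between two other vertices; a vertex adjoined by the second type is joined, at the moment of adjunction, exactly to the two ends $x,z$ of a pre-existing dense flag $(x,y,z)$, so taking the last-adjoined non-$A$ vertex $y'$ on a path one sees the path runs $x,y',z$ consecutively and may be replaced by one running $x,y,z$ instead, of smaller complexity, to which the inductive hypothesis applies (noting $(x,y,z)$ is itself a path covered by the hypothesis when $y\notin A$). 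Strict monotonicity of levels along a dense flag --- it never returns to a level it has left --- is used throughout.

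Granting $(\star)$, I would verify the five axioms of $\K_n$ for $D$ in turn. Axiom $(1)$: an $E_j$-cycle of $D$ cannot meet both $B\setminus A$ and $C\setminus A$, for a maximal arc of it lying in $B\setminus A$, with its endpoints in $A$, would by $(\star)$ be a dense flag confined to two consecutive levels and hence a single edge, whereas such an arc has length $\geq2$; so the cycle lies in $B$ or in $C$, contradicting $B,C\in\K_n$. Axioms $(4)$ and $(5)$: $(\star)$ pins down the shape of the simple cycle $\gamma$ appearing there; a $\gamma$ that meets both $B\setminus A$ and $C\setminus A$ splits into dense-flag arcs between consecutive vertices of $A$, and following those flags one finds every vertex of $\gamma$ incident to the pivot $a$, so such a $\gamma$ is contained in $R(a)$ and the hypothesis of the axiom is not met; otherwise $\gamma$ lies in $B$ or in $C$, say in $B$, and the required $E_{i-1}$- (resp.\ $E_{i+2}$-)path of length $\leq k-1$ is the one inside $R_B(a)\subseteq R_D(a)$ provided by $B\in\K_n$. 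Axioms $(2)$ and $(3)$ are interchanged by the self-duality of the notion, so it suffices to treat $(2)$: given distinct vertices of $D$, $(\star)$ shows that every level-$0$ vertex incident to both is reached through $A$, unless both vertices and the relevant witnessing flags already lie on a single side; so the desired intersection is the intersection computed inside $B$, or inside $C$, or inside $A$, each existing since $A,B,C\in\K_n$, and one checks --- from the description of residues in $D$ that falls out of $(\star)$ --- that this vertex still has the correct level-$0$ residue in the enlarged graph.

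The principal obstacle is $(\star)$: formulating and proving precisely that $\leq$ makes $B$ sit freely over $A$, so that inside $B$ one passes from one vertex of $A$ to another through $B\setminus A$ only along a dense flag. The second most delicate point is the bookkeeping for axioms $(2)$ and $(3)$: residues of vertices of $B$ genuinely grow in $D$ once $C$ is glued on, so one must check that an intersection, or generation, computed on a single side or inside $A$ still has the correct residue in $D$. Everything else is routine in the manner of Hrushovski amalgamation.
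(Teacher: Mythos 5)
The half of your argument establishing $B,C\leq D$ is correct, and it is exactly what the paper dismisses with ``Clearly'': transport the chain of $1$-point strong extensions witnessing $A\leq C$ across the free amalgam. The problem is the structural fact $(\star)$ on which your whole verification of $D\in\K_n$ rests: it is false. Take $A$ to be a dense flag $(a_0,a_1,a_2,a_3)$ with $a_i\in V_i$ (this lies in $\K_n$ for $n\geq 3$), and perform two type-2 extensions: first add $b_1\in V_1$ joined to $a_0$ and $a_2$ (using the dense flag $(a_0,a_1,a_2)$), then add $b_2\in V_2$ joined to $b_1$ and $a_3$ (using the dense flag $(b_1,a_2,a_3)$). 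Then $A\leq B:=A\cup\{b_1,b_2\}$, but the simple path $(a_2,b_1,b_2,a_3)$ has both endpoints in $A$, all interior vertices in $B\setminus A$, and level sequence $2,1,2,3$: it changes direction at $b_1$ and is not a dense flag. Your induction for $(\star)$ breaks at precisely this configuration: the last-adjoined vertex on the path is $b_2$, and replacing it by the old middle vertex $a_2$ of the witnessing flag produces the walk $(a_2,b_1,a_2,a_3)$, which is not simple, so the inductive hypothesis does not apply --- and no repair is possible, since the statement itself fails. Consequently the verifications of axiom (1), of the cycle axioms (4), (5) (where you moreover claim that a cycle meeting both $B\setminus A$ and $C\setminus A$ is contained in $R(a)$, which does not follow even granting flag-shaped arcs), and of the intersection/generation axioms (2), (3) are unsupported as written, since all of them lean on $(\star)$.

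The paper avoids any such global statement by arguing one extension at a time: it suffices to show that a single $1$-point strong extension $B'$ of a graph in $\K_n$ is again in $\K_n$, because --- by your own first paragraph --- $D$ is obtained from $B$ (and from $C$) by a chain of such extensions. For type-1 extensions this is immediate, the new vertex being a leaf or isolated. For a type-2 extension the new vertex $y$ has exactly two neighbours $y_1,y_2$, so any simple cycle through $y$ passes consecutively through $(y_1,y,y_2)$; replacing $y$ by the old middle vertex $z$ of the witnessing dense flag pushes the cycle into $B$, where the path required by (4)/(5) exists and persists in $B'$, and the intersection/generation conditions are inherited because the $V_0$- and $V_n$-incidences of $y$ are forced through $y_1$ and $y_2$. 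If you replace $(\star)$ by this local preservation lemma, your first paragraph completes the proof along the paper's lines.
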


\begin{proof}

Clearly, $B,C\leq D$. 
To see that $D\in\K_n$, note that if $B\in \K_n$ and $B'$ is an $1$-point strong
extension of $B$, then also $B'\in\K_n$. 
This is clear for strong extensions of type 1. For strong extensions of type 2.
suppose that $(b,a,b')$ is a path with $a\in V_i,b,b'\in V_{i-1}$, and 
$\gamma=(a,b,\ldots , b',a)\subset B'$ is an $E_i\cup\ldots\cup E_{i-j}$-path of length $k$ containing
the new vertex $y$. Since the new vertex has exactly two neighbours $y_1,y_2$, this implies that the vertex is of type $V_m$ for some $i-j\leq m\leq i$ and $(y_1,y,y_2)$ is contained in $\gamma$.
By construction of strong extensions, there is some $z\in B$ such that $(y_1,z,y_2)$ is a path. Hence we may replace all occurrences of $y$ in $\gamma$ by $z$. Then $\gamma$ is contained in $B$ and we find the required path in $R(a)$ with all $V_{i-j}$-vertices occurring in $\gamma$. 
\end{proof}

This shows that the class $(\K_n,\leq)$ has a strong Fra\"iss\'e limit $M_n$.
Here we say as usual that a  subset $A$ of $M_n$ is strong in $M_n$ if
$A\cap B\leq B$ for any finite set $B\subset M_n$.

\begin{proposition}\label{p:pseudospace}
The Hrushovski limit $M_n$ is a model of $T_n$.
\end{proposition}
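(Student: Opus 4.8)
The plan is to argue by induction on $n$, using the two genericity properties of the strong Fra\"iss\'e limit $M_n$: (i) every finite subset of $M_n$ is contained in a finite $C\in\K_n$ with $C\leq M_n$ (take $C$ to be a member of the chain whose union is $M_n$), and (ii) whenever $C\leq M_n$ is finite and $C\leq C'$ in $\K_n$, then $C'$ embeds over $C$ into $M_n$ with strong image. For $n=1$ the statement is immediate: by (i) every finite subgraph of $M_1$ lies in a finite member of $\K_1$ and so contains no $E_1$-cycle, while by (ii) one may attach arbitrarily many fresh neighbours to any given vertex; hence $M_1$ is cycle-free with all vertices of infinite valency, i.e.\ a free pseudoplane. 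Since the class $(\K_n,\leq)$ is self-dual under reversing the levels $V_i\mapsto V_{n-i}$, the Hrushovski limit $M_n$ is isomorphic to its own dual, so throughout it suffices to verify the lower half of each axiom scheme -- the $(a)$-clauses, clause $(\Sigma 4)_n(a)$, etc.\ -- the upper halves then following by transporting along this isomorphism.

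For the inductive step I would first dispose of the axioms that are inherited directly from membership in $\K_n$. Given a bad simple cycle $\gamma=(a,b,\dots,b',a)$ of length $k$ as in $(\Sigma 4)_n(a)$, enclose $\gamma\cup\{a\}$ in a finite $C\in\K_n$ with $C\leq M_n$; clause~4 of the definition of $\K_n$ then yields the required $E_{n-1}$-path of length at most $k-1$ from $b$ to $b'$ inside $R_C(a)\subseteq R_{M_n}(a)$, and the same device gives the stated consequence about simple cycles inside arbitrary windows $V_j\cup\dots\cup V_{j+m}$. For $(\Sigma 3)_n$, enclose $x\neq y$ in such a $C$; in $C$ they intersect (resp.\ generate) a vertex $z$ or the empty set, and I would prove the auxiliary fact that a $1$-point strong extension never alters $x\wedge y$ or $x\vee y$ for vertices already present -- a type-$1$ extension adds a vertex with at most one old neighbour, and a type-$2$ extension adds a vertex parallel to the middle of a dense flag, so in neither case does a new $V_0$- or $V_n$-vertex become incident to two old vertices unless this was already forced -- whence $z=x\wedge y$ (resp.\ $z=x\vee y$) already holds in $M_n$. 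The absence of $E_i$-cycles in $M_n$ is immediate from (i).

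The substance is $(\Sigma 1)_n$ and $(\Sigma 2)_n$. For each of the relevant substructures of $M_n$ -- the windows $V_0\cup\dots\cup V_{n-1}$ and $V_1\cup\dots\cup V_n$ and the residues $R(a)$ for $a\in V_0$ and for $a\in V_n$ -- I would show it is isomorphic to the strong Fra\"iss\'e limit of $\K_{n-1}$, so that the inductive hypothesis (that limit is a model of $T_{n-1}$) together with the uniqueness of strong limits finishes the job. Writing $N$ for such a substructure, re-indexed to levels $0,\dots,n-1$, this amounts to establishing genericity for $N$: (a) every finite $A\subseteq N$ lies in a finite member of $\K_{n-1}$ that is strong in $N$, obtained by restricting and re-indexing a suitable $C\in\K_n$ with $C\leq M_n$ (possibly after first enlarging $C$) and checking that the $\K_{n-1}$-axioms -- in particular the intersection/generation clause that does not literally survive a restriction of levels -- hold for the result; and (b) the extension property: given finite $A\leq N$ and a $1$-point strong extension $A\leq B$ in $\K_{n-1}$, the graph $B$ can be realised inside $N$ over $A$. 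For (b) the idea is to lift the extension: enclose $A$ -- in the residue case together with $a$ and dense flags witnessing $A\subseteq R(a)$ -- in a finite $C\leq M_n$, attach the new vertex of $B$ to $C$ by one or a few $1$-point strong extensions producing $C'\in\K_n$ with $C\leq C'$, and apply (ii). Attaching a new vertex below an old one is direct; for a new vertex $v$ sitting above an old vertex $u$ one instead performs a type-$2$ extension on a dense flag $(x,v_0,u)$ through $u$, with $x$ chosen outside $A$, which at once produces a new upper neighbour of $u$ and keeps $v$ incident to the bottom (resp.\ top) vertex of $N$.

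I expect the main obstacle to be exactly this lifting step in (b): one must verify that the enlarged finite graph $C'$ is again in $\K_n$ -- above all its clauses~4 and~5, which forces one to control the simple cycles running through the newly attached vertices by combining clauses~4 and~5 for $B$ in $\K_{n-1}$ with the combinatorics of the attachment -- and that the new edges are genuinely introduced by composites of $1$-point strong extensions, so that $C\leq C'$ is preserved. Securing the freedom to pick $x\notin A$, and more generally showing that finite strong subsets of $M_n$ leave enough room to carry out these attachments, will likely require a preliminary lemma on the shape of strong substructures of $M_n$ and on how the $\K_n$- and $\K_{n-1}$-notions of strong extension match up under passage to windows and residues.
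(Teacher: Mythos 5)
Your route is genuinely different from the paper's, and that difference is where the trouble lies. The paper never identifies pieces of $M_n$ with the lower-dimensional limit: it reads $(\Sigma 3)_n$ and $(\Sigma 4)_n$ off the definition of $\K_n$ together with strongness, and proves $(\Sigma 1)_n$, $(\Sigma 2)_n$ by an induction \emph{inside} $M_n$ on the width of the windows $V_j\cup\ldots\cup V_{j+i}$, using that $R(a)\cap(V_{j+1}\cup\ldots\cup V_{j+i+1})$ is the union of the residues $R(b)$ over the $V_{j+1}$-neighbours $b$ of $a$. You instead want the windows and the residues to be isomorphic to the Fra\"iss\'e limit of $(\K_{n-1},\leq)$ and then quote uniqueness; but the $\K_{n-1}$-genericity of these substructures is exactly the substance of the proposition, and in your write-up it is deferred to "preliminary lemmas". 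The lifting step you flag is not routine. For instance, to realize inside $R(a)$, $a\in V_n$, a type-1 extension adding $v\in V_i$ with $i<n-1$ whose only $A$-neighbour is a vertex $u\in V_{i-1}$ below it, the new $v$ must be made incident with $a$; since a type-2 extension of a finite strong $C\leq M_n$ can only duplicate the middle vertex of a dense flag already present, $v$ necessarily acquires at creation an \emph{old} upper neighbour $x\in V_{i+1}\cap R(a)$ sharing a $V_i$-neighbour with $u$. If $x$ (or every candidate flag from $u$ towards $a$ that one could parallel to manufacture a fresher $x$) is $E_{i+1}$-connected inside $R(a)$ to $A$, the image $A\cup\{v\}$ is not nice in $R(a)$, so the embedding is not strong. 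Showing that a suitable flag avoiding such connections exists requires the path/cycle combinatorics coming from $(\Sigma 4)$ and the niceness analysis -- i.e.\ essentially the work your plan was meant to bypass -- as does the needed fact that later $1$-point extensions create no new connections or $E_j$-paths, within the window or within $R(a)$, between vertices already present.

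There is also a smaller but genuine error: your auxiliary claim for $(\Sigma 3)_n$, that $1$-point strong extensions never alter $x\wedge y$ or $x\vee y$ for old vertices, is false as stated. If $C$ contains $x,y$ and a vertex $u$ incident with both, but no $V_0$-vertex below $u$, then $x$ and $y$ intersect in the empty set in $C$, while a later type-1 extension adding a $V_0$-vertex under $u$ makes the intersection nonempty in $M_n$. What actually has to be shown is that a witness $z$ computed in a sufficiently large strong set (one containing a common $V_0$-point of $x$ and $y$ when such exists in $M_n$) remains a witness globally. In sum, the architecture could plausibly be pushed through, but as written the proof of the main axioms is a programme rather than an argument, with the key steps unproved and one stated persistence claim incorrect.
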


\begin{proof}
By construction, $V_i\cup\ldots\cup V_{i+j}$ satisfies $(\Sigma 3)_j$ and $(\Sigma 4)_j$  for any $i,j$. In particular, $M_n$ satisfies 
$(\Sigma 3)_n$ and $(\Sigma 4)_n$.

\medskip
\noindent
{\bf $(\Sigma 1)_n$:}
In order to show that $M$ satisfies $(\Sigma 1)_n$, we first note that $V_i\cup V_{i+1}$ is a free pseudoplane for all $i=0,\ldots n-1$. Assume inductively that $V_j\cup\ldots \cup V_{j+i}$ 
is a free pseudospace of dimension $i$. To see that  $V_j\cup\ldots \cup V_{j+i+1}$ is a free pseudospace of dimension $i+1$, we need only verify $(\Sigma 2)_{i+1}$. Hence we have to show that  for $a\in V_j$  the residue $R(a)\cap (V_j\cup\ldots \cup V_{j+i+1})$ is a free pseudospace  of dimension $i$. We know by induction that $R(a)\cap (V_j\cup\ldots \cup V_{j+i})$ is a free pseudospace.

Clearly, 
\[R(a)\cap (V_{j+1}\cup\ldots \cup V_{j+i+1})=\bigcup \{R(b)\cap (V_{j+1}\cup\ldots \cup V_{j+i+1})\colon b\in V_{j+1}, E(a,b) \}.\] For each neighbour $b\in V_{j+1}$ of $a$, the set
$R(b)\cap (V_{j+1}\cup\ldots \cup V_{j+i+1})$ is a free pseudospaces of dimension $i-1$ by induction. Since $(V_{j+1}\cup\ldots \cup V_{j+i+1})$ is a free pseudospace of dimension $i$,  $(\Sigma 2)_{i+1}$ follows from the induction hypothesis.
Hence $V_0\cup\ldots \cup V_{n-1}$ and $V_1\cup\ldots \cup V_n$ are free pseudospaces of dimension $n-1$.

\medskip
\noindent
{\bf $(\Sigma 2)_n$:}
The proof of  $(\Sigma 2)_n$ is similar.
\end{proof}

We say that a model $M$ of $T_n$ is 
$\K_n$-saturated if for all finite $A\leq M$ and strong extensions $C$
of $A$ with $C\in\K_n$ there is a strong embedding of $C$ into $M$
fixing $A$ elementwise.
Clearly, by construction, $M_n$ is $\K_n$-saturated.

\begin{lemma}\label{l:K-sat}
An $L_n$-structure $M$  is an $\omega$-saturated  model of $T_n$ if and only if $M$ is $\K_n$-saturated.
\end{lemma}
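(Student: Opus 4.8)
The plan is to prove both directions of the equivalence by exploiting the now-standard interplay between $\omega$-saturation, the ages of finitely generated substructures, and the local axiom scheme defining $T_n$. Throughout, the crucial preliminary observation is that $\K_n$ is exactly the class of finite $L_n$-graphs embeddable in \emph{some} model of $T_n$, equivalently in $M_n$: every member of $\K_n$ strongly embeds into $M_n$ by $\K_n$-saturation of $M_n$, and conversely any finite subgraph of a model of $T_n$ satisfies conditions 1--5 in the definition of $\K_n$ since $E_i$-cycles are forbidden by $(\Sigma 1)$--$(\Sigma 2)$ and the intersection/generation/cycle-filling conditions are $\Pi_1$ consequences of $(\Sigma 3)_n$--$(\Sigma 4)_n$ applied inside the residues. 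I would state and prove this as a preliminary remark, since both directions of the Lemma rely on it.

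For the direction ($\K_n$-saturated $\Rightarrow$ $\omega$-saturated model of $T_n$): first check that a $\K_n$-saturated $M$ is a model of $T_n$. The axioms $(\Sigma 3)_n$ and $(\Sigma 4)_n$ are local: if some instance failed in $M$ one could read off a finite $A\leq M$ (closing under the strong-closure operation coming from $1$-point strong extensions, which keeps finite sets finite) witnessing the failure and then strongly embed into $M$ a member $C\in\K_n$ extending $A$ that repairs it --- e.g.\ for $(\Sigma 3)$ one adjoins the intersection/generation vertex, for $(\Sigma 4)$ one adjoins the required short $E_{n-1}$- or $E_2$-path. For $(\Sigma 1)_n$ and $(\Sigma 2)_n$ one argues similarly that "infinitely many neighbours" and "$R(x)$ is a free pseudospace of the appropriate dimension" follow by repeatedly realizing the $1$-point strong extensions of type 1 and type 2 inside $M$. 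Then for $\omega$-saturation: given a finite parameter set and a consistent type over it, take $A$ to be the strong closure of the parameters in $M$ (finite), take a realization in an elementary extension, let $B$ be the strong closure of $A$ together with that realization there; since $B\in\K_n$ and $A\leq B$, $\K_n$-saturation gives a strong copy of $B$ over $A$ inside $M$, and quantifier elimination down to the level of "strong-isomorphism type" --- which holds here because $M_n$ is a Fra\"iss\'e limit and strong embeddings are elementary maps (a fact available from the construction, or provable by a back-and-forth) --- shows the image realizes the type.

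For the direction ($\omega$-saturated model of $T_n$ $\Rightarrow$ $\K_n$-saturated): given finite $A\leq M$ and a strong extension $C\supseteq A$ with $C\in\K_n$, decompose $A\leq C$ into a chain of $1$-point strong extensions $A=C_0\leq C_1\leq\cdots\leq C_m=C$, and adjoin the points one at a time. At each step I must find, over the already-embedded copy of $C_{j}$ in $M$, a vertex of the prescribed type realizing the correct finite edge pattern (one new edge, or two new edges completing a dense flag $(x,y,z)$). By $\omega$-saturation it suffices to show the relevant partial type is finitely satisfiable in $M$, i.e.\ consistent with $T_n$; but this is precisely what membership $C_{j+1}\in\K_n$ guarantees together with the preliminary remark identifying $\K_n$ with ages of models of $T_n$ --- any finite configuration forced by finitely many formulas lies in $\K_n$ and hence embeds in $M_n\equiv M$, and one transfers back. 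One must also check the resulting embedding of all of $C$ is strong in $M$, which follows because each $C_j\leq C_{j+1}$ and $\leq$ is transitive, together with the characterization of strongness via $1$-point extensions.

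The main obstacle, and the step I would spend the most care on, is the bookkeeping in the ($\Leftarrow$) direction's passage from "realizes one new vertex type-correctly" to "the whole of $C$ embeds strongly": one needs that the finite type describing the next vertex's attachment is not merely consistent with $T_n$ but realized \emph{strongly} over the current image, which forces one to prove a transfer/quantifier-elimination statement of the form "strong embeddings between members of $\K_n$ into models of $T_n$ preserve the relevant formulas" --- essentially that $T_n$ is the theory of the Fra\"iss\'e limit and has quantifier elimination relative to the strong-closure predicate. If that elimination statement is taken as known (it is implicit in the Hrushovski/Fra\"iss\'e construction and the reference \cite{ziegler}), the rest is routine; if not, I would first isolate and prove it, via a back-and-forth between $\omega$-saturated models using $\K_n$-saturation on both sides, and only then deduce the Lemma.
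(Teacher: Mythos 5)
Your right-to-left reduction (decompose $A\leq C$ into $1$-point strong extensions and realize them one at a time by $\omega$-saturation) is exactly the paper's argument for the direction ``$\omega$-saturated model $\Rightarrow$ $\K_n$-saturated''. But the rest of the proposal has genuine gaps. First, your preliminary remark is false: $\K_n$ is \emph{not} the class of all finite subgraphs of models of $T_n$. Conditions 4 and 5 in the definition of $\K_n$ demand that the short path in $R(a)$ lie \emph{inside} $A$, so a simple cycle in $M_n$ taken as an induced subgraph \emph{without} its witnessing $E$-path in the residue is embeddable in $M_n$ but is not in $\K_n$; what is true is that the \emph{nice} (strong) finite subsets lie in $\K_n$, which is a much weaker statement. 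Since you use the remark in both directions, the transfers built on it break. Second, in the direction ``$\K_n$-saturated $\Rightarrow$ model of $T_n$'' your repair strategy does not work: $(\Sigma 4)_n$ asks for a short $E_{n-1}$-path between the \emph{already existing} vertices $b,b'$ inside $R(a)$, and such a path cannot be adjoined by $1$-point strong extensions (the vertex closing the path would be attached to two vertices of the same level, which is not a permitted extension), while the meet/join condition of $(\Sigma 3)_n$ is a condition about \emph{all} vertices of type $V_0$ (resp.\ $V_n$) in $M$ and cannot be arranged by adding one new vertex with finitely many prescribed incidences. Moreover, for an arbitrary $\K_n$-saturated $L_n$-structure you do not yet know that every finite subset extends to a finite strong subset lying in $\K_n$, which any such direct verification would need. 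Third, there is a circularity: you invoke $M\equiv M_n$ and ``strong embeddings are elementary'' (quantifier elimination relative to strong closure), proposing to prove the latter by a back-and-forth that uses $\K_n$-saturation of $\omega$-saturated models --- but completeness of $T_n$ and these elementarity facts are consequences of the very lemma you are proving.

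The paper's proof of the problematic direction is quite different and avoids all of this: if $M$ is $\K_n$-saturated, it is partially isomorphic (back-and-forth equivalent via strong finite sets) to $M_n$, hence elementarily equivalent to it and so a model of $T_n$; then choosing an $\omega$-saturated $M'\equiv M$, the already-proved direction makes $M'$ $\K_n$-saturated, so $M$ and $M'$ are partially isomorphic and $M$ inherits $\omega$-saturation from $M'$. No direct verification of the axioms or of saturation, and no quantifier-elimination statement, is needed. If you want to salvage a direct argument, you would first have to prove, for $\K_n$-saturated structures, the analogues of the ``strong $=$ nice'' remark and of the finite strong-closure lemma, which is considerably more work than the back-and-forth.
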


\begin{proof}
Let $M$ be an $\omega$-saturated model of $T_n$. To show that $M$ is
  $\K_n$-saturated, let $A\leq M$ and $A\leq B\in\K_n$.  By
  induction we may assume that $B$ is an $1$-point strong extension of $A$
  and by $\omega$-saturation it is easy to see that $B$ can be imbedded over $A$ into $M$. Conversely assume that $M$ is $\K_n$-saturated.
  Since $M$ is partially
  isomorphic to $M_n$, it is a model of $T_n$. Choose an
  $\omega$-saturated $M'\equiv M$. Then by the above $M'$ is
  $\K_n$-saturated. So $M'$ and $M$ are partially isomorphic, which
  implies that $M$ is $\omega$-saturated.
\end{proof}

\begin{corollary}
The theory $T_n$ is complete.
\end{corollary}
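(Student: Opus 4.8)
The plan is to run the standard Fra\"iss\'e-limit argument showing that all models of $T_n$ are elementarily equivalent. First, $T_n$ is consistent because $M_n\models T_n$ by Proposition~\ref{p:pseudospace}. Next, since every model of $T_n$ is elementarily equivalent to an $\omega$-saturated model of $T_n$, it is enough to prove that any two $\omega$-saturated models $M,N\models T_n$ are elementarily equivalent. By Lemma~\ref{l:K-sat} both $M$ and $N$ are $\K_n$-saturated, so the whole statement reduces to: any two $\K_n$-saturated models of $T_n$ are elementarily equivalent.

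To obtain this I would show that every $\K_n$-saturated model is partially isomorphic to the strong Fra\"iss\'e limit $M_n$ (this is exactly the fact already invoked inside the proof of Lemma~\ref{l:K-sat}); composing the two back-and-forth systems then gives that $M$ and $N$ are partially isomorphic, hence $L_{\infty\omega}$-equivalent, and in particular $M\equiv N$. The back-and-forth system between $M$ and $M_n$ consists of the isomorphisms $f\colon A\to B$ with $A\leq M$ and $B\leq M_n$ finite and strong; it contains the empty map, which is strong. For the extension step, given such an $f$ and a new point $a$ on the $M$-side, one enlarges $A\cup\{a\}$ to a finite strong $A'\leq M$, observes that $A'\in\K_n$, and uses $\K_n$-saturation of $M_n$ to realize the strong extension $f[A]\leq A''\cong A'$ inside $M_n$ over $f[A]$; the ``back'' step is symmetric, using $\K_n$-saturation of $M$. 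Since $A\leq A'$ is a composition of $1$-point strong extensions (and each such extension of a member of $\K_n$ is again in $\K_n$, by the argument in the Amalgamation Lemma), one may perform the extension one vertex at a time, which is all that $\K_n$-saturation was defined to handle.

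The part that needs care — and where the real content of the Corollary lies — is not the back-and-forth bookkeeping but two closure facts about models of $T_n$: that the self-sufficient closure of a finite set is again finite (so that the finite strong $A'$ above exists), and that a finite strong substructure of a model of $T_n$ actually belongs to $\K_n$, i.e.\ the intersection and generation witnesses demanded by $(\Sigma 3)_n$ and the short connecting paths demanded by $(\Sigma 4)_n$ are already present in any strong finite substructure. Both are routine consequences of the axioms $(\Sigma 1)_n$--$(\Sigma 4)_n$ together with the definition of $\leq$, and belong to the standard Hrushovski/Fra\"iss\'e package (cf.~\cite{ziegler}); granting them, uniqueness of the generic model up to partial isomorphism, and hence completeness of $T_n$, follow at once.
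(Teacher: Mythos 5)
Your proposal is correct and follows essentially the same route as the paper: pass to an $\omega$-saturated elementarily equivalent model, invoke Lemma~\ref{l:K-sat} to get $\K_n$-saturation, and conclude via partial isomorphism with the limit $M_n$ that all models are elementarily equivalent. The paper leaves the back-and-forth between $\K_n$-saturated models and $M_n$ (and the supporting closure facts, cf.\ Lemma~\ref{l:strongext} and Remark~\ref{r:qftp}) implicit, whereas you spell it out, but the argument is the same.
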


\begin{proof}
  Let $M$ be a model of $ T_n$. In order to show that $M$ is
  elementarily equivalent to $M_n$ choose an $\omega$-saturated
  $M'\equiv M$. By Lemma~\ref{l:K-sat}, $M'$ is
  $\K_n$-saturated. Now $M'$ and
  $M_n$ are partially isomorphic and therefore elementarily
  equivalent.
\end{proof}

We will see in Section~\ref{sec:building} that $T_n$ is the theory of
the building of type $A_{\infty, n+1}$ with infinite valencies.

\begin{definition}
 Following \emph{\cite{BP}} we call a subset $A$ of a model $M$ of $T_n$ \emph{nice} if 
\begin{enumerate}
 \item any $E_i$-path between elements of
$A$ lies entirely in $A$ and
\item if $a,b\in A$ are connected by a path in $M$ there is a path
from $a$ to $b$ inside $A$.
\end{enumerate}
\end{definition}
\begin{remark}\label{l:strong=nice}\upshape\label{r:qftp}
Note that a subset $A$ of $M_n$ is strong in $M_n$ if and only if it is nice. 
(This follows immediately from the definition of strong extension.)
\end{remark}

We now work in a very saturated model $\overline{M}$ of $T_n$.

\begin{lemma}\label{l:strongext}
If $A$ is a finite set, there is a nice finite set $B$ containing
$A$.
\end{lemma}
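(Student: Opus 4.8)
The plan is to build $B$ from $A$ by a closure process: repeatedly throw in the finitely many vertices forced by the "nice" conditions, and argue that the process terminates. First I would note that by Remark \ref{r:qftp} it suffices to produce a finite set $B \supseteq A$ which is strong (equivalently nice) in $\overline M$, i.e.\ such that for every finite $C$ with $B \subseteq C \subseteq \overline M$ we have $B \cap C = B \leq C$. Concretely, unwinding the definition of $1$-point strong extension, $B$ fails to be strong exactly when one of the following "defects" occurs: (i) there are $a,b \in B$ joined by an $E_i$-path in $\overline M$ that leaves $B$; (ii) there are $a,b \in B$ lying in the same connected component of $\overline M$ but with no path between them inside $B$; or (iii) there is a dense flag $(x,z)$ with $x,z \in B$ incident to some $y \notin B$ of the intermediate type with $E(x,y), E(y,z)$ — more precisely, a path of length $2$ through a vertex outside $B$ connecting two vertices of $B$ of the appropriate types. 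In each case one adds the (finitely many) missing vertices on a \emph{minimal} witnessing path.

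The key point making this terminate is the axiom scheme $(\Sigma 4)_n$ together with its consequences listed after the definition of $T_n$: whenever $a \in V_i$ and two neighbours $b,b'$ of $a$ (of the adjacent level) are connected by a cycle through $a$ of length $k$, there is a path from $b$ to $b'$ of length at most $k-1$ \emph{inside} $R(a)$ using only vertices already appearing in the cycle. This "geodesic bound with no new vertices on a lower level" is exactly what lets one bound the length of the minimal paths one needs to add, and hence bound the size of the closure. So the main steps are: (1) close $A$ under "add all vertices on a shortest $E_i$-path between two existing points that are $E_i$-connected", for each $i$; (2) close under "add all vertices on a shortest path between two existing points in the same component"; (3) close under intersections $x\wedge y$ and joins $x\vee y$ demanded by $(\Sigma 3)$, and under the type-2 strong-extension witnesses (dense-flag middle points); (4) show this process stabilises after finitely many rounds.

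The hard part is step (4): verifying termination. Naively, adding shortest paths can create new pairs of vertices that are now "too close" or that newly lie in a common residue, triggering further additions. The mechanism I would use to control this is to show that once all shortest $E_i$-paths among the original points have been added, any newly created short cycle lies in some residue $R(a)$ with $a$ already in the set, so by the consequence of $(\Sigma 4)$ it can be filled by a path \emph{already present} (it uses only vertices of the cycle on the relevant level), so no genuinely new vertices are forced; one bounds the diameter of each connected component of the closure inside $\overline M$ in terms of the diameter of $A$. This is essentially the argument of \cite{BP} for the dimension-$2$ case, and the inductive structure of $T_n$ — each residue of a $V_0$- or $V_n$-vertex being a free pseudospace of dimension $n-1$, to which one may apply the inductive form of the lemma — is what allows the same bookkeeping to go through uniformly in $n$. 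I would therefore set up the induction on $n$ explicitly, handling $n=1$ (free pseudoplane, where niceness is just "geodesically closed and component-saturated" and termination is immediate since the graph is a forest) as the base case.
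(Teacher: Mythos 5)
Your plan hinges entirely on step (4), and that step is not proved: you describe a global closure process and then only sketch why it should terminate, deferring to ``essentially the argument of \cite{BP}'' and to a bound on diameters that is never established. Moreover, the sketch rests on an overstatement of the consequence of $(\Sigma 4)$: the paper's listed consequence only guarantees that the replacing path inside $R(a)$ has all of its \emph{bottom-level} ($V_j$-) vertices among the vertices of the cycle; it may perfectly well introduce new vertices at the intermediate levels. So your key claim that a newly created cycle ``can be filled by a path already present, so no genuinely new vertices are forced'' does not follow, and the cascading problem you yourself identify remains open. There are also smaller misreadings that inflate the closure you are trying to control: niceness only asks that $E_i$-paths between points of $B$ lie in $B$ and that connectivity in $\overline M$ be witnessed inside $B$; it does not ask for closure under meets $x\wedge y$, joins $x\vee y$, or ``dense-flag middle points'' --- the type-2 strong extension describes a one-point extension that is \emph{allowed}, not a vertex that must already belong to $B$, and closing under all common neighbours of a dense flag would in fact add infinitely many vertices in a saturated model.

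The paper avoids the termination issue altogether by a one-point induction, and you should compare your plan with it. Since singletons are nice, it suffices to show: if $A$ is nice and $a$ is any vertex, then $A\cup\{a\}$ is contained in a finite nice set. If $a$ has no path to $A$, then $A\cup\{a\}$ is already nice; otherwise one reduces (adding the vertices of a connecting path one at a time, starting next to $A$) to the case where $a$ has a neighbour in $A$. If $a$ has two neighbours in $A$ one gets a dense flag and $A\cup\{a\}$ is nice; if $a$ has a unique neighbour, one adds at most the single $E_i$-path $\gamma$ from $a$ to $A$ (finite and essentially unique, since there are no $E_i$-cycles) and verifies directly, using the niceness of $A$, that $A\cup\gamma$ is nice: any $E_{i-1}$-connection from a vertex of $\gamma$ into $A$ would force $\gamma$ into $A$, contradicting $\gamma\cap A=\{b\}$. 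Note that the inductive hypothesis ``$A$ is nice'' is doing all the work that your diameter bookkeeping was supposed to do; no appeal to $(\Sigma 4)$, to residues, or to induction on $n$ is needed. As it stands, your proposal identifies the right difficulty but does not resolve it.
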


\begin{proof}
Since single vertices are nice it suffices to prove the
following 

\medskip
\noindent
{\bf Claim:}\emph{ If $A$ is nice and $a$ arbitrary, then
there is a nice finite set $B$ containing $A\cup\{a\}$.}

\medskip
\noindent
\emph{Proof of Claim:} 
Of course we may assume $a\notin A$. If there is no path from $a$ to $A$, clearly $A\cup\{a\}$ is nice.
Hence we may also assume that there is some path $\gamma=(a=x_0,\ldots b)$ for some $b\in A$ and $\gamma\cap A=\{b\}$. It therefore suffices to prove the claim for
the case where $a$ has a neighbour in $A$. If $a$ has two neighbours $x,y\in A$ then
$(x,a,y)$ is a dense flag and $A\cup\{a\}$ is nice.

Now assume that $a\in V_i$ has a unique neighbour of type $V_{i+1}$ in $A$. 
(The other case then follows by self-duality.)
If the $E_i$-connected component of $a$  does not intersect $A$, then again  $A\cup\{a\}$ is nice.
Otherwise there is some $E_i$-path $\gamma=(x_0=a,\ldots x_m=b)$ in $M_n$ with $\gamma\cap A=\{b\}$. If for some $V_{i-1}$-vertex $x_k$ of $\gamma$
there is an $E_{i-1}$-path to some $c\in A$, then the $E_i$-path from $c$ to $b$ extends $(x_k,\ldots x_m=b)$ and is entirely contained in $A$ since $A$ is nice. Since $\gamma\cap A=\{b\}$, no such $x_k$ exists implying that $A\cup\gamma$ is nice.
\end{proof}

\begin{corollary}\label{c:acl2}
The algebraic closure $\cl(A)$ contains the intersection of all nice sets
containing $A$. 

\end{corollary}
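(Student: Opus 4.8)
The plan is to write $N$ for the intersection of all nice subsets of $\overline{M}$ containing $A$ and to show directly that $N\subseteq\cl(A)$. The decisive point is that $N$ is actually \emph{finite}. Indeed, assume (as in Lemma~\ref{l:strongext}) that $A$ is finite; then Lemma~\ref{l:strongext} supplies a finite nice set $B$ with $A\subseteq B$, and since $B$ is one of the sets in the family over which the intersection defining $N$ is taken, we get $N\subseteq B$, so $N$ is finite.

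Next I would note that the family $\mathcal{F}$ of nice subsets of $\overline{M}$ containing $A$ is invariant under $\Aut(\overline{M}/A)$. This is because ``nice'' is a purely structural notion: its definition refers only to the relations $E$ and $V_0,\ldots,V_n$, through the notions of path, $E_i$-path and connectedness, all of which are preserved by any automorphism of $\overline{M}$; and an automorphism fixing $A$ pointwise carries a set containing $A$ to a set containing $A$. Hence every $\sigma\in\Aut(\overline{M}/A)$ (together with its inverse) maps $\mathcal{F}$ into itself, so it permutes $\mathcal{F}$, and therefore $\sigma(N)=\sigma\bigl(\bigcap\mathcal{F}\bigr)=\bigcap\sigma(\mathcal{F})=\bigcap\mathcal{F}=N$.

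Combining the two observations: $N$ is a finite subset of $\overline{M}$ with $\sigma(N)=N$ for every $\sigma\in\Aut(\overline{M}/A)$, so for each $b\in N$ the orbit of $b$ under $\Aut(\overline{M}/A)$ is contained in $N$ and hence finite. Since $\overline{M}$ is sufficiently saturated and homogeneous, a finite orbit over $A$ means $b\in\cl(A)$; thus $N\subseteq\cl(A)$, as claimed in Corollary~\ref{c:acl2}.

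I do not expect a genuine obstacle: the argument is essentially immediate once Lemma~\ref{l:strongext} is in hand. The only points requiring a word of care are that ``nice'' is indeed automorphism-invariant (so that $\mathcal{F}$ is permuted by $\Aut(\overline{M}/A)$) and that the intersection of such a permuted family of sets, one of which is finite, is again a finite invariant set; for an infinite parameter set $A$ one first replaces $B$ by a nice superset of $A$ of size $|A|+\aleph_0$ (directed unions of nice sets are nice) and argues analogously, but the relevant case — matching Lemma~\ref{l:strongext} — is $A$ finite.
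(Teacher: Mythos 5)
Your argument is correct and is exactly the (unstated) reasoning behind the paper's Corollary~\ref{c:acl2}, which is given there without proof as an immediate consequence of Lemma~\ref{l:strongext}: the intersection is contained in a finite nice set, is invariant under $\Aut(\overline{M}/A)$ since niceness is a structural, automorphism-invariant notion, and a finite $A$-invariant set lies in $\cl(A)$ by saturation and homogeneity of $\overline{M}$. No gap; your closing remark on infinite $A$ (bounded, invariant sets are algebraic) is also fine but not needed for the way the corollary is used.
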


Let us say that $\gamma$ \emph{changes
direction in $x_i$} if $x_i\in V_j$ and either $x_{i-1},x_{i+1}\in V_{j-1}$ or $x_{i-1},x_{i+1}\in V_{j-1}$ for some $j$.  Clearly a path which doesn't change direction is a dense flag.

\begin{definition}\label{def:reduced}
We call a path $\gamma=(x_0,\ldots x_k)\subseteq V_j\cup\ldots \cup V_{j+m}$ \emph{reduced} if the following
holds:
\begin{enumerate}
\item if $m=1$ the path $\gamma$ is reduced if it does not contain any repetition.
\item any path $(x_{i-1},x_i,z_1,\ldots,z_t, x_{i+k},x_{i+k+1})$ contained in $V_j\cup\ldots \cup V_{j+m-1}$ or in $V_{j+1}\cup\ldots \cup V_{j+m}$ is reduced if  $(x_i,z_1,\ldots,z_t, x_{i+k})$ is reduced.
\end{enumerate}
\end{definition}

Note that the definition  immediately implies the following:
\begin{remark}\upshape\label{r:reducedpath}
Suppose that every reduced path from $a$ to $b$ contains $x$ and let $\gamma_1,\gamma_2$ be paths from $a$ to $x$ and from $x$ to $b$ respectively. Then the path
$\gamma_1\gamma_2$ is reduced if and only if $\gamma_1$ and $\gamma_2$ are.
\end{remark}

Using the fact that $M_n$ is $\omega$-saturated we can now describe the algebraic
closure:

\begin{lemma}\label{l:acl}
A vertex $x\neq a,b$ is in $\cl(ab)$ if and only if there is a reduced path
from $a$ to $b$ that changes direction in $x$. Hence $\cl(ab)=\{a,b\}$ if and only if $a,b$ is a flag or $a$ and $b$ are not connected. 
\end{lemma}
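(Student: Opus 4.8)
The plan is to prove both implications by a combinatorial analysis of paths, using the $\K_n$-saturation of $M_n$ (via Lemma~\ref{l:K-sat}) to move between "algebraic over $ab$" and "contained in every nice set containing $ab$". The backbone of the argument will be the following reformulation: by Corollary~\ref{c:acl2} and Remark~\ref{r:qftp}, if $x\notin\acl(ab)$ then $x$ lies outside some nice set containing $a,b$, and conversely if $x$ lies in infinitely many distinct conjugates over $ab$ then $x\notin\acl(ab)$. So I want to show: $x$ is forced into every nice set containing $\{a,b\}$ $\iff$ some reduced path from $a$ to $b$ changes direction at $x$.

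First I would handle the right-to-left direction. Suppose $\gamma$ is a reduced path from $a$ to $b$ changing direction at $x=x_i$, say with $x_{i-1},x_{i+1}\in V_{j-1}$ and $x_i\in V_j$ (the other case by self-duality). Let $B$ be any nice set containing $a$ and $b$. By niceness (clause 2) there is a path from $a$ to $b$ inside $B$; the real content is that $x$ must appear. Here I would argue by induction on the dimension band $m$ of $\gamma$ (i.e. $\gamma\subseteq V_{j_0}\cup\cdots\cup V_{j_0+m}$), using the recursive structure of Definition~\ref{def:reduced}: a reduced path in band $m$ decomposes at its extreme-level vertices into reduced sub-paths in bands $m-1$, and the "change of direction at $x$" is preserved by this decomposition (Remark~\ref{r:reducedpath} is the key bookkeeping tool). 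In the base case $m=1$ a reduced path is simply a repetition-free path in a free pseudoplane, which is a tree, so it is the unique path between its endpoints and hence every vertex on it — in particular $x$ — lies in every nice (= subtree-closed and connected) set containing the endpoints. Pushing this up the induction, I would show that niceness of $B$ forces $B$ to contain a reduced $a$–$b$ path, that any two reduced $a$–$b$ paths pass through the same set of direction-change vertices (this is essentially uniqueness of "reduced normal form", provable again by the inductive decomposition together with the $(\Sigma 4)_n$ axioms that let one shortcut cycles), and therefore $x\in B$. This gives $x\in\acl(ab)$, indeed $x\in\dcl(ab)$.

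For the left-to-right direction, suppose no reduced path from $a$ to $b$ changes direction at $x$; I must produce infinitely many distinct conjugates of $x$ over $ab$, equivalently a nice set containing $a,b$ but not $x$. If $a,b$ are not connected this is trivial, and if $x$ is not connected to $\{a,b\}$ likewise, so assume everything is connected. Take a reduced path $\delta$ from $a$ to $b$; by hypothesis $\delta$ does not change direction at $x$, so either $x\notin\delta$, or $x$ lies on $\delta$ as an interior vertex of a dense flag (no direction change). In the latter case I can reroute $\delta$ around $x$ using the strong-extension structure: $(x_{i-1},x,x_{i+1})$ being a dense sub-flag with $x_{i\pm1}$ on the same side, $\K_n$-saturation lets me add a fresh vertex $x'$ incident to the same pair and then argue there is an automorphism over $ab$ swapping nothing but pushing $x$ off the canonical path — more carefully, I would build a nice finite set $B_0\supseteq\{a,b\}$ by the method of Lemma~\ref{l:strongext}, chosen to route along $\delta$ but, at the step where $x$ would be added, instead adjoin a new strong $1$-point extension realizing the same quantifier-free type over the neighbours, so $x\notin B_0$; then $\acl(ab)\subseteq B_0$ (wait — I only know $\acl(ab)$ meets every nice set, so I need $x\notin$ some nice set, which is exactly $x\notin B_0$). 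Then $B_0$ being nice and $B_0\le M_n$, the map fixing $B_0$ extends to automorphisms of $\overline M$ moving $x$ to any of its infinitely many "siblings", so $x\notin\acl(ab)$.

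The main obstacle I expect is the uniqueness-of-direction-changes statement in the first half: showing that the set of vertices where a reduced $a$–$b$ path changes direction is independent of the chosen reduced path. This is where the axioms $(\Sigma 3)_n$ (intersections and joins exist) and especially $(\Sigma 4)_n$ (cycles not in a residue can be shortcut inside the residue) have to be combined with the recursive Definition~\ref{def:reduced} and Remark~\ref{r:reducedpath} in a genuine induction on the dimension band, rather than a one-line appeal; the danger is a reduced path that changes direction "high up" versus one that changes "low down" between the same endpoints, and ruling this out requires the self-duality of the free pseudospace together with the fact that residues of extreme-type vertices are again free pseudospaces of one lower dimension ($(\Sigma 2)_n$), so that the induction hypothesis applies inside each residue. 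The final sentence of the lemma is then immediate: $\acl(ab)=\{a,b\}$ iff no reduced path from $a$ to $b$ changes direction at an interior vertex, iff every reduced $a$–$b$ path is a dense flag (i.e. $a,b$ are incident) or there is no path at all.
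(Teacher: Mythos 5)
The core of your right-to-left argument rests on the claim that the set of direction-change vertices of a reduced path from $a$ to $b$ is independent of the chosen reduced path, so that any nice set containing $a,b$ (hence containing \emph{a} reduced path) must contain the particular vertex $x$. This claim is false, and the ``high up versus low down'' phenomenon you propose to rule out genuinely occurs: take $a,b\in V_1$ with $a\wedge b=p\in V_0$ and $a\vee b=L\in V_2$ (two chambers sharing their $V_0$- and $V_2$-vertex; this configuration certainly embeds strongly into $M_n$). Then $(a,p,b)$ and $(a,L,b)$ are both reduced paths from $a$ to $b$ -- each lies in two adjacent levels and has no repetition -- but one changes direction at $p$ and the other at $L$. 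So there is no ``uniqueness of reduced normal form'' for the direction-change set, and your induction cannot establish it; in the building language this is just the existence of distinct reduced decompositions of the same Weyl group element coming from commuting generators. (In this particular example both $p$ and $L$ do land in every nice set, because each is forced by the $E_i$-path-closure clause of niceness, but that rescue is exactly what your argument does not provide in general.)

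The paper's proof works around precisely this point by a different mechanism. It first singles out the subset $C_{a,b}$ of direction-change vertices $y$ whose two neighbours $x,z\in V_j$ are \emph{not} joined by both an $E_j$- and an $E_{j+1}$-path (i.e.\ the non-bypassable direction changes), shows $C_{a,b}$ is finite by comparing each witnessing path with one fixed reduced path $\gamma$ and using $(\Sigma 4)$ plus a disjointness argument along $\gamma$, and concludes $C_{a,b}\subseteq\cl(ab)$ from automorphism-invariance of the whole set -- not from any path-independence of individual direction changes. The remaining direction-change vertices (the bypassable ones, like $p$ and $L$ above) are then handled by a separate induction on the number of levels: any reduced path whose interior avoids $\cl(ab)$ can have its bypassable peaks replaced so as to drop a level, and must therefore be a flag; hence every direction change on a reduced path lies in $\cl(ab)$. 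Your left-to-right sketch (building a nice set around $a,b$ that avoids $x$ and using homogeneity to produce infinitely many conjugates) is in the right spirit -- the paper dispatches that direction in one line -- but as it stands the right-to-left half of your proposal has a genuine gap that cannot be closed along the route you describe.
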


We have in fact $\cl(ab)=\dcl(ab)$.
\begin{proof}

If there is no reduced path between $a$ and $b$ changing direction in $c$, then $c$ has infinitely many conjugates over $ab$, hence $c\notin\cl(ab)$. 
So suppose there is a reduced path from $a$ to $b$ changing direction in $c$.
Let $C_{a,b}$ the set of all vertices $y$ such that there is a reduced path $(a,\ldots, x,y,z,\ldots b)$ changing direction in $y$ and such that for $x,z\in V_j$ are not connected
by both an $E_j$ and an $E_{j+1}$-path.

We claim that $C_{a,b}$ is a finite set. Let $\gamma=(a,\ldots b)$ be a reduced path. For any $y\in  C_{a,b}\setminus\gamma$  let $\gamma_y=(a,\ldots, x,y,x,\ldots b) $ be a reduced path witnessing that $y\in C_{a,b}$. Composing suitable pieces of $\gamma$ and $\gamma_y$ we obtain a simple cycle $\gamma_y``$ changing direction in $y$. Since $y\in C_{a,b}$  we have $\gamma_{y}''\subset R(y)$ by $(\Sigma 4)$. For $y_1\neq y_2\in V_j\cap C_{a,b}$ the paths $\gamma_{y_1}``\cap\gamma$ and $\gamma_{y_2}``\cap\gamma$ must be disjoined since otherwise we obtain a simple cycle changing direction in $y_1$ and $y_2$, but not contained in $R(y_1)$ which would contradict $y_1\in C_{a,b}$.

Clearly $C_{a,b}$ is invariant under all automorphisms fixing $a,b$ and so $C_{a,b}\subseteq \cl(ab)$.

Now consider a reduced path $\gamma$ from $a$ to $b$ changing direction in $c$. 
We may inductively assume that $\gamma\cap\cl(ab)=\{a,b\}$ since otherwise we may replace $a,b$ by some $a',b'\in\cl(ab)\cap \gamma$ and consider the piece of $\gamma$ containing $c$. In particular $\gamma\cap\ C_{a,b}=\emptyset$. 

It is therefore sufficient to prove that any reduced path $\gamma=(a,\ldots, b)$ with $\gamma\cap \cl(ab)=\{a,b\}$ is a flag.
We do induction on the number of levels involved. Clearly, if $\gamma\subseteq V_{j}\cup V_{j+1}$ for some $j$, then $\gamma\subseteq\cl(ab)$ since such a path is unique. Hence $\gamma=(a,b)$. 
Now suppose that $\gamma=(a,a_1,\ldots b)\subseteq V_{j}\cup\ldots\cup V_{j+m}$. 
We claim that $a\in V_j\cup V_{j+m}$: otherwise  we may replace
all paths $(x,c,y)$ where $\gamma$ changes direction by $E_k$ paths for appropriate $k$
and reduce the number of levels of $\gamma$ (since $\gamma\cap C_{a,b}=\emptyset$).  By induction this new path is a flag, which clearly is impossible.
Hence $a\in V_j\cup V_{j+m}$ and we may apply the same consideration to the subpath
of $\gamma$ starting at $a_1$. By induction $(a_1,\ldots b)$ and hence $\gamma$ are flags.
\end{proof}

In Section~\ref{sec:building} we will see that in the prime model the
algebraic closure will be described by reduced words in the Coxeter group associated to the building.

\begin{proposition}\label{l:treeacl}
If $g\in\cl(A)$, there exist $a,b\in A$ with $x\in\cl(ab)$.

\end{proposition}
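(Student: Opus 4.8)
(In the statement, ``$x$'' should read ``$g$'': the claim is that $g\in\acl(ab)$ for some $a,b\in A$.) The plan is to combine the description of $\acl(ab)$ in Lemma~\ref{l:acl} with the configuration control provided by $(\Sigma 3)$ and $(\Sigma 4)$, arguing in the spirit of the proof of Lemma~\ref{l:acl}.

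\emph{Reductions.} By the finite character of algebraic closure we may assume $A$ finite, and clearly $g\notin A$. If $g$ is connected to no element of $A$, then by $\K_n$-saturation of $M_n$ (Lemma~\ref{l:K-sat}) there are infinitely many vertices realising $\tp(g/A)$ in connected components disjoint from $A$, so $g\notin\acl(A)$; hence $g$ is connected to some element of $A$. Now induct on $|A|$; the cases $|A|\le 2$ are immediate. If $g\in\acl(A')$ for some proper $A'\subsetneq A$, we are done by induction (and the observation that $\acl$ of a singleton or of $\emptyset$ is contained in $\acl(ab)$ for any $a,b$); so we may assume $g$ is algebraic over $A$ but over no proper subset, and $|A|\ge 3$. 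It now suffices to reach a contradiction.

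\emph{The geometric core.} For each pair $a,b\in A$ fix a reduced path $\gamma_{ab}$ from $a$ to $b$ (these exist by Lemma~\ref{l:strongext}, since the relevant vertices are connected). The claim is that these paths assemble into a ``thin'' configuration: repeatedly using $(\Sigma 3)$ to introduce the intersection ($\wedge$) and generation ($\vee$) vertices of sub-paths, and $(\Sigma 4)$ to collapse into a residue any simple cycle built from sub-paths of the $\gamma_{ab}$ — precisely the cycle-collapsing mechanism used for the finiteness of $C_{a,b}$ in the proof of Lemma~\ref{l:acl} — one shows $\acl(A)\subseteq\bigcup_{a,b\in A}\acl(ab)$. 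In particular $g$ lies on a reduced path between two elements $a,b$ of $A$ that changes direction in $g$, so $g\in\acl(ab)$ by Lemma~\ref{l:acl}; as $\{a,b\}$ is a proper subset of $A$, this contradicts the minimality above.

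\emph{The obstacle.} The delicate point is making ``thin configuration'' precise — equivalently, showing directly that a vertex pinned down by $A$ but by no proper subset cannot exist once $|A|\ge 3$. Concretely one fixes a nice finite hull $B\supseteq A$ (Lemma~\ref{l:strongext}) with $g\notin B$; the reduced paths joining $g$ to $A$ then leave $B$, and, using that $g$ is not a common corner of $A$, one wants to reroute them simultaneously — e.g.\ by switching branches at the first vertex where a path leaves $B$, a move available by $\K_n$-saturation and the pseudoplane structure of the residues coming from $(\Sigma 2)$ — while $(\Sigma 4)$ ensures that no cycle through $g$ and $A$ obstructs the move and $(\Sigma 3)$ handles any $\wedge$- or $\vee$-vertex that happens to equal $g$ (such a vertex is itself a corner of a reduced path between two elements of $A$, hence in some $\acl(ab)$). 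This yields infinitely many conjugates of $g$ over $A$, contradicting $g\in\acl(A)$. The real work is the bookkeeping needed to coordinate the several reduced paths from $g$ to $A$ at once; organising the argument by induction on the number of levels involved, as in Lemma~\ref{l:acl}, is the natural way to do this.
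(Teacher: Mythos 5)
There is a genuine gap: your ``geometric core'' paragraph asserts exactly what has to be proved. The sentence claiming that repeated use of $(\Sigma 3)$ and $(\Sigma 4)$ to collapse cycles ``shows $\acl(A)\subseteq\bigcup_{a,b\in A}\acl(ab)$'' is the full content of the proposition, and no argument is given for it; your final paragraph then concedes that making this precise (``the real work is the bookkeeping'') is left open. The alternative strategy you sketch there --- rerouting all reduced paths from $g$ to $A$ at the first vertex where they leave a nice hull $B$, so as to manufacture infinitely many conjugates of $g$ over $A$ --- is precisely where the difficulty lies: $g$ may be pinned down because it is a corner of several interlocking cycles through different pairs of elements of $A$, and you have not shown that $(\Sigma 4)$ lets you switch branches consistently for all these paths at once, nor how $\K_n$-saturation produces the new configuration while preserving the quantifier-free type over $A$. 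As written, the proposal is a plan with the central step missing, not a proof.

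For comparison, the paper's proof avoids any global ``thin configuration'' analysis by first reducing, via induction on $|A|$, to a single transitivity statement: if $d\in\cl(bc)$ and $g\in\cl(ad)$, then $g\in\cl(ab)\cup\cl(bc)\cup\cl(ac)$. By Lemma~\ref{l:acl} this becomes a statement about two reduced paths, $\gamma_1=(b,\ldots,d,\ldots,c)$ changing direction in $d$ and $\gamma_2=(d,\ldots,g,\ldots,a)$ changing direction in $g$, and it is proved by induction on the number of levels $V_i\cup\ldots\cup V_{i+k}$ these paths occupy: interior vertices of the extreme level $V_{i+k}$ are replaced by reduced paths inside their residues (using $(\Sigma 4)$), the induction hypothesis is applied to the resulting paths in fewer levels to see that one of the two compositions through $d$ is reduced, and then the original pieces are restored, still reduced and still changing direction in $g$. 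If you want to salvage your write-up, the most direct fix is to adopt this two-step structure (reduction to the three-point statement, then induction on levels with residue replacements) rather than trying to control all pairs of elements of $A$ simultaneously.
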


\begin{proof}
We may assume that $A$ is finite. By induction it suffices to prove 
that if $d\in\cl(bc),g\in\cl(ad)$, then $g\in\cl(ab)\cup\cl(bc)\cup\cl(ac)$.

By Lemma~\ref{l:acl} there is a reduced path $\gamma_1=(b,\ldots,d,\ldots, c)$
changing direction in $d$ and a reduced path $\gamma_2=(d,\ldots ,g,\ldots, a)$ changing
direction in $g$. 
If $\gamma_1\cup\gamma_2\in V_i\cup V_{i+1}$ for some $i$, then clearly either
$(a,\ldots,g\ldots,d,\ldots,b)$ or
$(a,\ldots,g\ldots,d,\ldots,c)$ is reduced.

Now assume that $\gamma_1\cup\gamma_2\in V_i\cup\ldots\cup V_{i+k}$.
Clearly we may  assume
that $d$ is not contained in every reduced path from $a$ to $b$ or in
every reduced path from $a$ to $c$. Furthermore, we may reduce to the case where no vertex of $\gamma_1$ is contained
in every reduced path from $b$ to $c$ and no vertex of $\gamma_2$  is contained
in every reduced path from $a$ to $d$.

By symmetry we may assume that $d\notin V_{i+k}$. We may then replace any element $x\in V_{i+k}$ in the interior of $\gamma_1$ and $\gamma_2$ by a 
reduced path in $R(x)$ between its neighbours. If $a,b,c\in V_{i+k}$,
then we may extend $\gamma_1,\gamma_2$ by vertices in $V_{i+k-1}$ to reduced paths containing $a,b,c$ in its interior and also replace $a,b,c$. We may thus
replace the paths $(b,\ldots,d,\ldots, c)$ and $(a,\ldots g,\ldots, d)$ by  reduced paths
$(b_1,\ldots,d, \ldots,c_1)$ and $(a_1,\ldots d)$ contained in $V_i\cup\ldots\cup V_{i+k-1}$.
  
By induction assumption, at least one of $(a_1,\ldots d,\ldots b_1)$ and  $(a_1,\ldots d,\ldots c_1)$
is reduced, and replacing the new pieces of the path by the old ones,
this path remains reduced and changes direction in $g$. 
\end{proof}

\begin{proposition}
For any vertex $a$ and set $A$, there is a flag $C\in\cl(A)$ such that for  any $b\in\cl(A)$ there is a reduced path from $a$ to $b$ passing through one
of the elements of $C$. 
\end{proposition}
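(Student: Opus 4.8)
The plan is to let $C$ be the set of ``gateways'' of $\cl(A)$ as seen from $a$. Write $B:=\cl(A)$; we may assume that $a$ is connected to every element of $B$ (which is the only case carrying content). Call $c\in B$ a \emph{gateway} if there is a reduced path from $a$ to $c$ meeting $B$ only in $c$, and let $C$ be the set of gateways, so that $C\subseteq\cl(A)$. This $C$ covers $B$ in the required sense: given $b\in B$, pick any reduced path $\gamma$ from $a$ to $b$, let $c$ be the first vertex of $B$ met by $\gamma$, and reduce the initial segment of $\gamma$ from $a$ to $c$ to a reduced path $\gamma'$; since reducing only deletes vertices, $\gamma'$ still meets $B$ exactly in its endpoint $c$, so $c\in C$, and $\gamma$ itself runs through $c$. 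In particular $C\neq\emptyset$.

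The real point is that $C$ is a flag, i.e.\ any two gateways $c_1\neq c_2$ are incident. Fix reduced paths $\gamma_i$ from $a$ to $c_i$ with $\gamma_i\cap B=\{c_i\}$, and let $z$ be the first vertex on $\gamma_1$, read from $c_1$ toward $a$, that also lies on $\gamma_2$; since $c_1\notin\gamma_2$ and $c_2\notin\gamma_1$ we have $z\neq c_1,c_2$, hence $z\notin B$. Let $\beta_1$ be the piece of $\gamma_1$ from $z$ to $c_1$ and $\beta_2$ the piece of $\gamma_2$ from $z$ to $c_2$; by the choice of $z$ these meet only in $z$, so $w:=\beta_1^{-1}\beta_2$ is a path from $c_1$ to $c_2$ with $w\cap B=\{c_1,c_2\}$. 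Reduce $w$ to a reduced path $w'$; reduction removes vertices only, so $w'$ is a simple path contained in $w$ with $w'\cap B=\{c_1,c_2\}$, whence the interior of $w'$ avoids $B$. If $w'$ changed direction at an interior vertex $x$, then $x\in\cl(c_1c_2)$ by Lemma~\ref{l:acl}, while $\cl(c_1c_2)\subseteq\cl(\cl(A))=\cl(A)=B$ — contradicting $x\notin B$. So $w'$ changes direction nowhere; being a path without a change of direction it is a dense flag, and therefore $c_1$ and $c_2$ are incident. Thus $C$, in any ordering, is a flag with the required property.

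The only substantive ingredient is the appeal to Lemma~\ref{l:acl} — equivalently, the axiom $(\Sigma 4)$: this is exactly what forbids two gateways of $\cl(A)$ from being joined, outside of $\cl(A)$, by a reduced path that bends, so they must in fact be incident. Everything else is routine combinatorics of reduced paths that may be used freely: any two connected vertices are joined by a reduced path, a simple path is reduced, and reducing a path (by deleting repetitions) never introduces new vertices. The one place demanding a little care is that $\gamma_1$ and $\gamma_2$ may meet $a$ and each other repeatedly; taking $z$ to be the first return of $\gamma_1$ to $\gamma_2$ from the $c_1$ end isolates genuinely disjoint tails $\beta_1,\beta_2$, which is all the argument uses.
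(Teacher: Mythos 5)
Your overall strategy is the same as the paper's: take for $C$ the ``first entry points'' of reduced paths from $a$ into $\cl(A)$, produce a reduced path between two such points whose interior avoids $\cl(A)$, and then use Lemma~\ref{l:acl} to conclude that any change of direction would lie in $\cl(c_1c_2)\subseteq\cl(A)$, so the path is a dense flag and the two points are incident. The gap is in the step your argument leans on to get that path: the claim that $w=\beta_1^{-1}\beta_2$ can be reduced ``by deleting vertices only'' (together with the companion claim that a simple path is reduced). Both claims are false in the free pseudospace for $n\geq 2$. Concretely, in $M_2$ take a point $p$ on a line $\ell$, two planes $P,Q$ adjacent to $\ell$, a further line $\ell'$ adjacent to $p$ and $P$, and a further line $m$ adjacent to $p$ and $Q$; this configuration is built from the flag $(p,\ell,P)$ by one-point strong extensions, so it sits strongly in $M_2$. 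The path $(P,\ell',p,m,Q)$ is simple, but its only change of direction is at $p$, and $p\notin\cl(PQ)$ (it has infinitely many conjugates over $P,Q$), so by Lemma~\ref{l:acl} it is not reduced; moreover none of $\ell',p,m$ lies in $\cl(PQ)$, so no reduced path from $P$ to $Q$ is contained in $\{P,\ell',p,m,Q\}$ --- every reduced path from $P$ to $Q$ must pass through the vertex $\ell$, which does not occur on the original path. Thus reduction can force you through new vertices, and then you lose all control over whether the interior of $w'$ meets $B$: the vertices reduction introduces are exactly of the kind (meets and joins of vertices on $w$) that tend to lie in $\cl(c_1c_2)\subseteq B$, which is fatal for the intended contradiction.

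This is precisely where the paper's proof does something different: it does not reduce an arbitrary concatenation and hope the vertex set is preserved, but chooses the reduced paths $\gamma_1,\gamma_2$ from $a$ to elements of $\cl(A)$ so that their first entry into $\cl(A)$ occurs at minimal possible distance from $a$, and then reduces the composite path through $a$; the minimality of the entry distance is what guarantees that the resulting reduced path between the two entry vertices meets $\cl(A)$ only in its endpoints. Your covering step can be salvaged without the deletion claim (an initial segment of a reduced path is reduced, so the first $B$-vertex on any reduced path from $a$ is already a gateway), but the flag step needs either the paper's minimality device or a genuine lemma controlling how reduction interacts with $B$; as written, the argument does not go through.
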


The flag $C$ is called the \emph{projection} from $a$ to $A$ and we write $C=\proj(a/A)$.
Note that  $\proj(a/A)=\emptyset$ if and only if $a$ is not connected to any vertex of $\cl(A)$.
\begin{proof}
Let $b_1,b_2\in\cl (A)$ and let $\gamma_1=(x_0=a,\ldots ,x_m=b_1)$ and $\gamma_2=(y_0=a,\ldots ,y_l=b_2)$ be reduced paths such that $i,j$ are minimal possible with $x_i,y_j\in\cl(A)$.
By composing the initial segments of $\gamma_1$ and $\gamma_2$ and reducing we obtain a reduced path from $x_i$ to $y_j$ intersecting $\cl(A)$ only in $x_i,y_j$ since $x_i, y_j$ were chosen at minimal distance from $a$. By Lemma~\ref{l:acl} $x_i,y_j$ is a flag. Thus the set of such vertices forms a flag $C$. 
\end{proof}

It is now easy to show the following:

\begin{theorem}\label{T:stable}
The theory $T_n$ is $\omega$-stable.

\end{theorem}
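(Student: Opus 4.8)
The plan is to count types over a countable model by reducing every type to a combination of a quantifier-free type over a nice (equivalently, strong) finite set and a finite amount of algebraic data. First I would use Lemma~\ref{l:strongext} to replace an arbitrary countable parameter set $A$ by a nice countable superset $A'$ (a countable increasing union of the finite nice sets provided by the lemma), so that $A'$ is strong in $\overline{M}$; since $\cl(A)$ is determined by $A$ (Corollary~\ref{c:acl2} and Proposition~\ref{l:treeacl}), it suffices to bound the number of types over such a strong countable $A'$. There are only countably many such sets up to the relevant data, and the point is that a complete type over $A'$ is controlled by its quantifier-free type together with how the realizing element attaches to $\cl(A')$.

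The key step is to show that for any element $a$, the type $\tp(a/A')$ is determined by finitely much information: namely the flag $\proj(a/A')=C$ (from the Proposition defining projections), the quantifier-free type of $a$ over $C$, and the (finite) reduced path data connecting $a$ to $C$. By Lemma~\ref{l:acl} and Proposition~\ref{l:treeacl}, any vertex of $\cl(aA')$ lying "between" $a$ and $\cl(A')$ sits on a reduced path from $a$ into $C$, and by the projection proposition all of $\cl(A')$ is reached from $a$ through $C$; so once we fix the finite configuration $a\cup C\cup(\text{reduced paths }a\to C)$ inside $\overline{M}$ up to isomorphism over $C$, and note that this configuration is strong (nice) by the argument in the proof of Lemma~\ref{l:strongext}, the $\K_n$-saturation of $\overline{M}$ (Lemma~\ref{l:K-sat}) shows the type is determined: any two realizations of the same such data are conjugate over $A'$ by an automorphism, because both strong configurations embed over $A'$ and partial isomorphism of $\K_n$-saturated models extends. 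Since $C$ ranges over the countably many finite flags in $\cl(A')$ and the path data over each $C$ has only countably many isomorphism types, there are at most countably many $1$-types over $A'$; the same bookkeeping, applied coordinatewise using Proposition~\ref{l:treeacl} to decompose $\cl$ of a finite tuple into pairwise algebraic closures, handles $n$-types.

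I would then conclude: $|S_n(A)|\le\aleph_0$ for every countable $A$, hence $T_n$ is $\omega$-stable. The main obstacle I expect is the bookkeeping in the previous paragraph — making precise that "the reduced-path data from $a$ to $C$" is genuinely finite and genuinely determines the quantifier-free type of $a$ over $A'$, so that $\K_n$-saturation can be invoked. This rests on Lemma~\ref{l:acl} (the algebraic closure of a pair is the set of direction-changes of reduced paths, which is finite and in fact equals $\dcl$) and on the observation, extracted from the proofs of Lemma~\ref{l:strongext} and the amalgamation lemma, that attaching $a$ together with a finite system of reduced paths to a nice set yields a nice set; once these are in hand the type count is routine and the theorem follows.
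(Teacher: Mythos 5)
Your proposal is correct and follows essentially the same route as the paper: project the element (or tuple) onto the parameter set to get a finite flag, observe that the element together with its projection lies in a finite nice/strong configuration whose quantifier-free type determines the complete type (Remark~\ref{r:qftp}, via Lemmas~\ref{l:strong=nice} and \ref{l:strongext}), and count the countably many possibilities. The only cosmetic difference is that the paper counts types over a countable model directly, while you first enlarge a countable set to a countable nice set; the underlying lemmas and the counting are the same.
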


\begin{proof}
Let $M$ be a countable model and let $\bar d$ be a tuple from  $\overline{M}$. Let $C\in M$ be the finite set of projections from
$\bar d$ to $M$. Then the type $\tp(\bar d/M)$ is determined by $\tp(\bar d/C)$.
By Lemmas~\ref{l:strong=nice} and \ref{l:strongext}, $\bar d\cup C$ is contained in a finite strong subset of $M_n$ and for
such subsets the quantifier-free type determines the type by Remark~\ref{r:qftp}. Hence there are only countably many types over a countable model.
\end{proof}

In fact, it is easy to see directly without counting types that $T_n$ is superstable (see Remark~\ref{r:superstable}).

\begin{corollary}
The \fps has weak elimination of imaginaries.
\end{corollary}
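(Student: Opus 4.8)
The plan is to establish weak elimination of imaginaries by exhibiting, for every finite tuple $\bar d$ and every set $A$ (in a monster model $\overline M$) whose type over $\bar d$ we wish to "code", a finite canonical parameter built from the combinatorial invariants already developed in the excerpt. The key observation is that by Theorem~\ref{T:stable} and its proof, $\tp(\bar d/A)$ is controlled by $\tp(\bar d/\proj(\bar d/A))$ together with $\tp(\bar d/\cl(\bar d))$-type data, and $\proj(\bar d/A)$ is a finite flag lying in $\cl(A)$. So the natural candidate for a canonical base is the finite set $\proj(\bar d/A)$ itself (or rather the tuple of projections of each coordinate of $\bar d$), which is definable over $A$.

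First I would recall that for weak elimination of imaginaries it suffices to show: for every $\bar d$ and every (small) $A$, there is a finite set $e\subseteq\overline M$ with $e\in\dcl(A)$ and such that $\tp(\bar d/A)$ does not fork over $e$ and moreover $\tp(\bar d/A)$ is the unique nonforking extension of its restriction to $e$ — equivalently, $\Aut(\overline M/e)$-invariance of $\tp(\bar d/A)$. Using $\omega$-stability (Theorem~\ref{T:stable}) we have canonical bases $\Cb(\bar d/A)$ in $\overline M^{\mathrm{eq}}$; the task is to show $\Cb(\bar d/A)$ is interdefinable with a finite real tuple. Take $e$ to be the union over coordinates $d_i$ of $\bar d$ of the finite flags $\proj(d_i/A)$, together with the finitely many vertices of $\cl(\bar d)$ that lie "between" $\bar d$ and these projections; by the Proposition defining $\proj$, every $b\in\cl(A)$ connected to $d_i$ is reached from $d_i$ by a reduced path through $\proj(d_i/A)$, so $e$ captures exactly the interaction of $\bar d$ with $\cl(A)$.

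The main steps are then: (1) show $e$ is $A$-definable — it is, since $\proj(d_i/A)$ is the $\acl$-flag of a fixed description relative to $\cl(A)$, hence fixed by every automorphism fixing $A$ pointwise, and it is finite, so $e\in\dcl(A)$; (2) show $\tp(\bar d/A)$ is $\Aut(\overline M/e)$-invariant: given $\sigma\in\Aut(\overline M/e)$, one argues via the quantifier-free description of types on finite nice sets (Remark~\ref{r:qftp}) together with Lemma~\ref{l:acl} and Proposition~\ref{l:treeacl} that $\tp(\sigma\bar d/A)=\tp(\bar d/A)$, because the only $A$-relevant data of $\bar d$ — the reduced paths from $\bar d$ into $\cl(A)$ and the change-of-direction vertices they pass through — factor through $e$; (3) conclude $\Cb(\bar d/A)\subseteq\dcl(e)$ and $e\subseteq\dcl(\Cb(\bar d/A)\cup\ldots)$ appropriately, giving that the canonical base is interalgebraic (in fact interdefinable up to a finite set) with a real finite tuple, which is exactly weak elimination of imaginaries.

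The hard part will be step (2): making precise that no information about $\tp(\bar d/A)$ "escapes" the finite set $e$ into the rest of $\cl(A)$. Concretely, one must rule out that $\bar d$ and $A$ interact through long reduced paths whose change-of-direction vertices accumulate — but this is precisely controlled by the $(\Sigma 4)$ axioms and the finiteness of $C_{a,b}$ established inside the proof of Lemma~\ref{l:acl}, which bound how reduced paths between two vertices can branch. I would leverage Proposition~\ref{l:treeacl} (the tree-like behaviour of $\acl$) to reduce an arbitrary connection between $\bar d$ and $\cl(A)$ to one passing through the projection flag, and then invoke $\omega$-saturation and homogeneity of $M_n$ to realize the required automorphism fixing $e$. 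Once the finiteness and definability of $e$ are in hand, the rest is the standard translation between "canonical base is interdefinable with a finite real tuple" and weak elimination of imaginaries.
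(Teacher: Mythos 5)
Your approach is essentially the paper's: its entire proof is the single observation that $\Cb(\mathrm{stp}(a/A))$ can be chosen to be the finite real set $\proj(a/A)$, which is exactly your canonical parameter, and your steps (1)--(3) merely spell out the standard translation from ``canonical bases are interdefinable/interalgebraic with finite real tuples'' to weak elimination of imaginaries, using Corollary~\ref{c:forking} and the stationarity behind it. One small correction: drop the extra ``vertices of $\cl(\bar d)$ lying between $\bar d$ and the projections'' from your set $e$ --- these are not fixed by automorphisms fixing $A$ pointwise, so including them would destroy $e\in\dcl(A)$; the projection flags alone suffice, exactly as in the paper.
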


\begin{proof}
Let $a$ be a vertex and $A$ any set. Then we can choose $\Cb(stp(a/A))$
as the projection of $a$ on $A$. This is a finite set.
\end{proof}

The following immediate corollary will be very useful:
\begin{corollary}\label{c:forking}
The vertex $a$ is independent from $A$ over $C$ if $\proj(a/AC)\subseteq\cl(C)$. In particular, $a$ is independent from $A$ over  $\emptyset$ if and only if $a$ is not connected to any vertex of $\cl(A)$.
\end{corollary}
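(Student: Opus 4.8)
The plan is to deduce this from the preceding corollary (weak elimination of imaginaries, with the canonical base of $\operatorname{stp}(a/A)$ represented by the finite set $\proj(a/A)$) together with the standard description of forking in stable theories via canonical bases. Recall that in a stable theory, for $C\subseteq B$ the type $\operatorname{tp}(a/B)$ does not fork over $C$ if and only if the canonical base of the strong type $\operatorname{stp}(a/B)$ is contained in $\cl(C)$; here one uses that, since $C\subseteq B$, the strong type $\operatorname{stp}(a/B)$ forks over $C$ exactly when $\operatorname{tp}(a/B)$ does. Applying this with $B=AC$, and using that $\proj(a/AC)$ is a canonical base for $\operatorname{stp}(a/AC)$ made up of real elements, we obtain: if $\proj(a/AC)\subseteq\cl(C)$, then $\operatorname{tp}(a/AC)$ does not fork over $C$, i.e.\ $\indep{a}{C}{A}$.

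For the ``in particular'' the only extra ingredient is the computation $\cl(\emptyset)=\emptyset$: by Proposition~\ref{l:treeacl} every element of $\cl(\emptyset)$ would lie in $\cl(ab)$ for some $a,b\in\emptyset$, and there are none. Hence, taking $C=\emptyset$ in the first part, $\indep{a}{\emptyset}{A}$ holds as soon as $\proj(a/A)\subseteq\cl(\emptyset)=\emptyset$, that is, as soon as $\proj(a/A)=\emptyset$; and by the remark following the definition of the projection, $\proj(a/A)=\emptyset$ precisely when $a$ is not connected to any vertex of $\cl(A)$. For the converse, if $a$ is connected to some vertex of $\cl(A)$, then $\proj(a/A)\neq\emptyset$, so the canonical base of $\operatorname{stp}(a/A)$ is not contained in $\cl(\emptyset)=\emptyset$, whence $\operatorname{tp}(a/A)$ forks over $\emptyset$ and $\notind{a}{\emptyset}{A}$.

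Since everything above is assembled from results already proved (the projection/canonical base corollary, Proposition~\ref{l:treeacl}, and the remark identifying $\proj(a/A)=\emptyset$ with disconnectedness from $\cl(A)$), I do not expect a genuine obstacle. The one point deserving a word of care is the passage between strong types and types in the canonical base criterion, but this is routine: since $C\subseteq AC$, a formula over $AC$ forking over $C$ already witnesses forking of $\operatorname{stp}(a/AC)$, and a nonforking extension of $\operatorname{tp}(a/AC)$ to $\cl(AC)$ can be carried to $\operatorname{stp}(a/AC)$ by an automorphism fixing $AC$, hence $C$, pointwise.
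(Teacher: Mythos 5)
Your argument is correct and is essentially the paper's intended one: the paper states this as an immediate consequence of the preceding corollary identifying $\Cb(\operatorname{stp}(a/A))$ with the finite real set $\proj(a/A)$, and you simply spell out the standard canonical-base criterion for nonforking (plus $\cl(\emptyset)=\emptyset$ and the remark that $\proj(a/A)=\emptyset$ exactly when $a$ is disconnected from $\cl(A)$), which is the same route.
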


\begin{sideremark}\upshape\label{r:superstable}
As in \cite{TZU} we could have defined a notion of independence on models of $T_n$ by saying
\[\indep{A}{C}{B}\]
if and only if $\proj(a/BC)\subseteq\cl(C)$ for all $a\in\cl(A)$.
It is easy to see that this notion of independece satisfies the characterizing properties of forking in stable theories (see \cite{TZ} Ch. 8) and hence agrees with the usual one. Note that the existence of nonforking extensions follows
from the construction of $M_n$ as a Hrushovski limit.
Since we have just seen that for any type $\tp(a/A)$ there is a finite set $A_0$
such that $\indep{a}{A_0}{A}$ this shows directly (without counting types) that $T_n$ is superstable.
\end{sideremark}

Using this description of forking it is easy to give a list of regular types
such that any nonalgebraic type is non-orthogonal to one of these. This is entirely
similar to the list given in \cite{BP} and we omit the details but will return to this point in Section~\ref{sec:types}. It is also clear from
this description of forking that the geometry on these types is trivial.

\section{Ampleness}
We now recall the definition of a theory being $n$-ample given by Pillay in \cite{Pi}.

\begin{definition}
A theory $T$ eliminating imaginaries is called $n$-ample if possibly after naming parameters there are tuples $a_0,\ldots a_n$ in $M$ such that the following holds:

\begin{enumerate}
\item for $i=0,\ldots n-1$ we have 
\[\cl(a_0,.\ldots a_{i-1},a_i)\cap \cl(a_0,.\ldots a_{i-1},a_{i+1})=\cl(a_0,.\ldots a_{i-1});\]
\item $\notind{a_n}{}{a_0},$ and
\item $\indep{a_n}{a_i}{a_0\ldots a_i}$ \ for  $i=0,\ldots n-1$.
\end{enumerate}

\end{definition}

\begin{remark}\label{r:evans_ample}
In \cite{evans}, Evans requires the slightly more natural condition
\begin{enumerate}
\item[{3'.}]  $\indep{a_n a_{n-1}\ldots a_{i+1}}{a_i}{a_0\ldots a_{i-1}}$ \ for  $i=0,\ldots n-1$.
\end{enumerate}
\end{remark}

\begin{theorem}\label{T:n-ample}
The theory $T_n$ is $n$-ample (in the sense of Evans' defintion) and 
any maximal flag $(x_0,\ldots x_n)$ in $M_n$ is a witness for this.
\end{theorem}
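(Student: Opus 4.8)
The plan is to take a maximal flag $(x_0,\dots,x_n)$ in $M_n$ and verify the three conditions in Evans' definition of $n$-ampleness directly, using the description of algebraic closure (Lemma~\ref{l:acl}) and the characterisation of forking (Corollary~\ref{c:forking}) from the previous section. A maximal flag means $x_i\in V_i$ for each $i$, with $E(x_{i-1},x_i)$ holding, so that $(x_0,\dots,x_n)$ is a dense flag; this is the natural candidate since the $x_i$ are pairwise incident and span all $n+1$ levels, which should force the ``most spread out'' possible configuration.

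First I would check condition (1): $\acl(x_0,\dots,x_i)\cap\acl(x_0,\dots,x_{i-1},x_{i+1})=\acl(x_0,\dots,x_{i-1})$ for $i=0,\dots,n-1$. By Proposition~\ref{l:treeacl}, every element of the algebraic closure of a set lies in $\acl$ of some pair from that set, so it suffices to understand $\acl$ of pairs of flag vertices. By Lemma~\ref{l:acl}, $\acl(ab)=\{a,b\}$ whenever $a,b$ is a flag; since any two vertices among $x_0,\dots,x_n$ form a flag, every relevant pairwise algebraic closure is just the pair itself. Hence each of the three sets in condition (1) is essentially the corresponding finite set of flag vertices (one has to be slightly careful to include all pairs, but since the whole flag is a flag, $\acl$ of any subset of it is just that subset together with nothing new), and the intersection identity becomes the trivial set-theoretic identity $\{x_0,\dots,x_i\}\cap\{x_0,\dots,x_{i-1},x_{i+1}\}=\{x_0,\dots,x_{i-1}\}$, which holds because a reduced path realizing membership would have to change direction, contradicting flag-ness. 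I would spell this out carefully since it is the step most prone to hidden subtleties.

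Next, condition (2): $\notind{x_n}{}{x_0}$. By the second part of Corollary~\ref{c:forking}, $x_n$ is independent from $x_0$ over $\emptyset$ iff $x_n$ is not connected to any vertex of $\acl(x_0)=\{x_0\}$; but $x_n$ is connected to $x_0$ through the incidence path $x_0,x_1,\dots,x_n$, so $x_n$ and $x_0$ are connected and hence dependent. For condition (3') I would show $\indep{x_n x_{n-1}\cdots x_{i+1}}{x_i}{x_0\cdots x_{i-1}}$ for each $i$, using Corollary~\ref{c:forking} (or the independence notion of Side remark~\ref{r:superstable}): I need that for every $a\in\acl(x_n,\dots,x_{i+1})$ the projection $\proj(a/x_i x_0\cdots x_{i-1})$ is contained in $\acl(x_0,\dots,x_{i-1})$. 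By Proposition~\ref{l:treeacl} again it suffices to treat $a\in\acl(x_j x_k)$ with $j,k\geq i+1$, and since that flag-pair has trivial $\acl$, really $a$ ranges over $\{x_{i+1},\dots,x_n\}$. For such $a=x_k$ with $k\ge i+1$, any reduced path from $x_k$ into $\acl(x_i,x_0,\dots,x_{i-1})$ must first reach a level-$\le i$ vertex; the incidence path $x_k,\dots,x_{i+1},x_i$ reaches $x_i$, and I claim every reduced path from $x_k$ to a vertex of $\{x_0,\dots,x_i\}$ passes through $x_i$ — this is where the axioms $(\Sigma 4)$ and the ``intersect/generate'' axioms $(\Sigma 3)$ do the real work, since they prevent alternate routes from level $k$ down past $x_i$ that avoid it, forcing $\proj(x_k/x_i x_0\cdots x_{i-1})=\{x_i\}\subseteq\acl(x_0,\dots,x_i)$, and after removing $x_i$ one gets projection inside $\acl(x_0,\dots,x_{i-1})$.

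\textbf{Main obstacle.} I expect the genuinely delicate point to be condition (3'): showing that no reduced path from a higher-level flag vertex $x_k$ to the lower part of the flag can ``sneak around'' $x_i$. Concretely, I must rule out a reduced path from $x_k$ to some $x_j$ with $j<i$ that changes direction only at vertices not in the flag and never passes through $x_i$; if such a path existed it would, together with the incidence path, produce a simple cycle whose structure is forbidden by $(\Sigma 4)_n$ (via its iterated consequences for sub-pseudospaces) or would force $x_k$ and $x_j$ to intersect/generate a vertex off the flag, contradicting $(\Sigma 3)$ and flag-ness. Making this cycle-analysis precise — choosing the right levels $i$ to apply $(\Sigma 4)$ in, and handling the endpoints $x_0$ and $x_n$ by the self-duality noted after the definition — is the crux; the rest is bookkeeping with Lemma~\ref{l:acl} and Corollary~\ref{c:forking}.
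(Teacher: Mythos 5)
Your handling of conditions (1) and (2) is fine and matches what the paper's one-line proof intends: since any two vertices of the flag form a flag, Lemma~\ref{l:acl} together with Proposition~\ref{l:treeacl} gives $\cl(x_{j_1},\ldots,x_{j_r})=\{x_{j_1},\ldots,x_{j_r}\}$ for any subset of the flag, so the intersection condition is trivial, and $\notind{x_n}{}{x_0}$ follows from Corollary~\ref{c:forking} because $x_0$ and $x_n$ are connected. The problem is condition (3$'$), and precisely at the step you yourself single out as the crux. The claim that \emph{every} reduced path from $x_k$ ($k\ge i+1$) to a vertex of $\{x_0,\ldots,x_i\}$ passes through $x_i$ is false, and no amount of $(\Sigma 3)/(\Sigma 4)$ cycle analysis will prove it: by $(\Sigma 2)$ the residue $R(x_0)$ is a free pseudospace of dimension $n-1$, so there are infinitely many vertices $y\in V_1$, $y\neq x_1$, with $(x_0,y,x_2)$ a dense flag, and $(x_2,y,x_0)$ is a reduced path from $x_2$ to $x_0$ avoiding $x_1$; the same happens at every level. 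This abundance of alternative routes is not an obstacle to be excluded -- it is exactly the feature that makes $T_n$ $n$-ample rather than one-based -- so the program you describe as the ``main obstacle'' cannot be carried out.

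The independence $\indep{x_n\ldots x_{i+1}}{x_i}{x_0\ldots x_{i-1}}$ is nevertheless true, but for a different and much simpler reason, coming from how the projection is defined (nearest entry points of reduced paths into the algebraic closure) rather than from uniqueness of paths. Since $\cl(x_0\ldots x_i)=\{x_0,\ldots,x_i\}$ and every path from $x_k\in V_k$ to a vertex $x_l\in V_l$ with $l\le i$ has length at least $k-l\ge k-i$, with $k-i$ attained (only at $x_i$) by the flag path $(x_k,x_{k-1},\ldots,x_i)$, the nearest entry point of any reduced path from $x_k$ into $\{x_0,\ldots,x_i\}$ is $x_i$; hence $\proj(x_k/x_0\ldots x_i)=\{x_i\}$. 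Note also that you state the forking criterion with the wrong base: by Side remark~\ref{r:superstable} what is needed is $\proj(a/x_0\ldots x_i)\subseteq\cl(x_i)$ for all $a\in\cl(x_{i+1}\ldots x_n)=\{x_{i+1},\ldots,x_n\}$, not containment in $\cl(x_0,\ldots,x_{i-1})$; the computation above gives exactly this. With that replacement your verification of (3$'$) goes through and the argument is then essentially the paper's (which simply cites Lemma~\ref{l:acl} and Corollary~\ref{c:forking}), but as written your proof rests on a false intermediate claim and a strategy for proving it that must fail.
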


\begin{proof}
This follows immediately from the description of $\cl$ in Lemma~\ref{l:acl} and
of forking in Corollary~\ref{c:forking}.
\end{proof}

\begin{theorem}\label{T:notample}
The \fpsn is not $n+1$-ample.
\end{theorem}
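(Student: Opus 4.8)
The plan is to suppose, for contradiction, that $T_n$ is $(n+1)$-ample and derive a configuration that violates the structural constraints imposed by Lemma~\ref{l:acl}, Proposition~\ref{l:treeacl} and Corollary~\ref{c:forking}. So suppose we have tuples $a_0,\ldots,a_{n+1}$ (over named parameters, which we may absorb by working over a strong/nice finite set) witnessing $(n+1)$-ampleness. Since forking is governed by connectedness inside algebraic closures and $\cl$ is generated by the pairwise closures $\cl(xy)$ (Proposition~\ref{l:treeacl}), we may first reduce to the case where each $a_i$ is a single vertex: replacing a tuple by the finitely many vertices of its algebraic closure, the ampleness conditions are inherited by suitable single-vertex sub-configurations, because the intersection and independence conditions are "tree-like" — any witness to dependence of $a_{n+1}$ on $a_0$ already lives on a reduced path between two individual vertices.

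The core of the argument is then a counting/combinatorial statement about reduced paths and changes of direction. From condition (1) of ampleness, $\cl(a_0\ldots a_{i-1}a_i)\cap\cl(a_0\ldots a_{i-1}a_{i+1})=\cl(a_0\ldots a_{i-1})$, together with the description of $\cl$ by reduced paths, one extracts that the reduced path realizing the dependence $\notind{a_{n+1}}{}{a_0}$ must pass successively "through" $a_1,a_2,\ldots,a_n$ and change direction at each of them in an essential way. More precisely: by Corollary~\ref{c:forking}, $\indep{a_{n+1}}{a_i}{a_0\ldots a_i}$ forces $\proj(a_{n+1}/a_0\ldots a_{n+1})$ — or rather the witnessing reduced path from $a_{n+1}$ back toward $a_0$ — to be pinned down only via $a_i$ once $a_0,\ldots,a_{i-1}$ are fixed, i.e. the path from $a_{n+1}$ to $a_0$ factors through each $a_i$. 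A reduced path in $V_j\cup\ldots\cup V_{j+m}$ lies in at most $m+1$ levels, and each essential change of direction at an interior vertex forces the path to use a new level (this is exactly the reduction step used in the proof of Lemma~\ref{l:acl}, where one removes a change of direction by replacing $(x,c,y)$ with an $E_k$-path and thereby \emph{decreases} the number of levels). Since in a $\fpsn$ the vertices take only $n+1$ levels $V_0,\ldots,V_n$, a reduced path can exhibit at most $n$ such changes of direction in its interior (each jump between consecutive levels contributing at most one, and a flag — no changes — being the base case). An $(n+1)$-ample configuration would demand a single reduced path from $a_{n+1}$ to $a_0$ passing through and changing direction at $a_1,\ldots,a_n$ — that is $n$ interior changes of direction — \emph{plus} the additional non-flag behaviour at the two ends forced by $\notind{a_{n+1}}{}{a_0}$ and by the requirement that $a_0$ and $a_{n+1}$ themselves not already be "absorbed", giving one change too many. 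This is the contradiction.

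The main obstacle I expect is making the phrase "the path changes direction essentially at $a_i$" precise and genuinely forced by the ampleness axioms, rather than merely plausible: condition (3) of Evans' definition ($\indep{a_{n+1}a_n\ldots a_{i+1}}{a_i}{a_0\ldots a_{i-1}}$) and condition (1) together need to be converted, via Corollary~\ref{c:forking} and Lemma~\ref{l:acl}, into the statement that in the (unique, up to the irrelevant part) reduced path from $a_{n+1}$ to $a_0$, the vertices $a_0,a_1,\ldots,a_{n+1}$ occur \emph{in order} and each interior $a_i$ is a point where the path changes direction — equivalently, $a_i\in\cl(a_{i-1}a_{i+1})$ but the three are not a flag. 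One must check that if the path were a flag between two consecutive $a_i,a_{i+1}$, or if it skipped one of them, then one of the independence or intersection conditions fails; this is where Proposition~\ref{l:treeacl} (reducing $\cl(A)$ to pairwise closures) and the self-duality remark do the bookkeeping. Once that translation is in hand, the level-counting argument above — reduced paths in an $\fpsn$ live in $\le n+1$ levels and each essential change of direction consumes a level, so at most $n-1$ interior changes of direction are available, contradicting the demand for $n$ of them together with the genuine dependence at the ends — closes the proof. I would also invoke the $\omega$-saturation of $M_n$ and Lemma~\ref{l:strongext} at the outset so that all the algebraic closures and reduced paths witnessing the configuration can be taken inside a single finite nice (equivalently strong) subset, where Remark~\ref{r:qftp} lets us reason purely quantifier-freely about the graph.
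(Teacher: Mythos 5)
Your central combinatorial claim is false, and it is what the whole argument rests on. You assert that in a \fpsn a reduced path can have at most about $n$ interior changes of direction because ``each essential change of direction consumes a level''. Already in the free pseudoplane ($n=1$) this fails: by Definition~\ref{def:reduced} every path without repetition in $V_0\cup V_1$ is reduced, and such a path changes direction at \emph{every} interior vertex, so reduced paths admit arbitrarily many changes of direction (and all those vertices lie in the algebraic closure of the endpoints, consistently with Lemma~\ref{l:acl}). The level-dropping move you cite from the proof of Lemma~\ref{l:acl} is only available at a change of direction that is \emph{not} in the set $C_{a,b}$; it does not bound the number of changes of direction of a reduced path. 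Moreover your picture of what ampleness forces is inverted: by Theorem~\ref{T:n-ample} the witnesses to $n$-ampleness are maximal flags, i.e.\ paths with \emph{no} change of direction, so one cannot hope to show that an $(n+1)$-ample configuration ``demands $n$ interior changes of direction''; the truth is the opposite, namely that the independence conditions forbid any change of direction along the connecting path, and the obstruction is that a flag has at most $n+1$ vertices.

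That is exactly how the paper argues, and the two steps you would need are missing from your sketch. First, the reduction is not to single vertices (your appeal to Proposition~\ref{l:treeacl} does not give that, and the $a_i$ are tuples); instead one replaces each $a_i$ by the projection \emph{flag} $f_i$, chosen carefully as $f_0=\proj(a_{n+1}/a_0A)$, $f_{n+1}=\proj(f_0/a_{n+1}A)$ and $f_i=\proj(f_{n+1}/f_0\ldots f_{i-1}a_iA)=\proj(f_{n+1}/a_iA)\in\cl(a_iA)$, where the equality of these projections uses the hypotheses $\indep{a_{n+1}}{Aa_i}{a_0\ldots a_i}$ via Corollary~\ref{c:forking}; this is what produces a single reduced path from $f_0$ to $f_{n+1}$ meeting the $f_i$ in ascending order. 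Second, the counting is the cheap one: since there are $n+2$ flags $f_0,\ldots,f_{n+1}$ to visit but a flag has at most $n+1$ vertices, the path must change direction at \emph{some} vertex $x$; by Lemma~\ref{l:acl} that single $x$ lies in $\cl(f_if_{i+1})\cap\cl(f_if_{i+2})$ for a suitable $i$, while $\indep{x}{f_i}{a_0\ldots a_iA}$ forces $x\notin\cl(a_0\ldots a_iA)$, contradicting the intersection condition. So the proof needs one forced change of direction and a contradiction from it, not a count of many changes of direction against the number of levels; as written, your argument cannot be repaired without replacing its key step by this mechanism.
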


\begin{proof}
Suppose towards a contradiction that $a_0,\ldots a_{n+1}$ are witnesses for $T_n$ being $n+1$-ample over some set of parameters $A$. We have 
\[\notind{a_{n+1}}{A}{a_0},\]
\[\indep{a_{n+1}}{Aa_i}{a_0\ldots a_i},i=0,\ldots n.\]

By the first condition there are vertices in $\cl(a_0)$ and in $ \cl(a_{n+1})$ which are in the same connected component. Put $f_0=\proj(a_{n+1}/a_0A)\in\cl(a_0A)$ and $f_{n+1}=\proj(f_0/a_{n+1}A)\in\cl(a_{n+1}A)$.

Since \[\indep{a_{n+1}}{Aa_i}{a_0\ldots a_i}, i=1,\ldots n\] using Corollary~\ref{c:forking} we inductively find 
flags 
\[f_i=\proj(f_{n+1}/f_0f_1\ldots f_{i-1}a_iA)=\proj(f_{n+1}/a_iA)\in\cl(a_iA),i=1,\ldots n\] such that \[\indep{f_{n+1}}{f_i}{f_0f_1\ldots f_i}.\]

For $i=1,\ldots n$ we clearly have
\[\cl(f_0,f_1.\ldots f_{i-1},f_i)\cap \cl(f_0,.\ldots f_{i-1},f_{i+1})\subseteq \cl(a_0,.\ldots a_{i-1}A).\]

By construction, there is a reduced path $\gamma=(f_0,x_1,\ldots x_k=f_{n+1})$ containing a vertex
of each of the $f_i$ in ascending order. Since we cannot have a flag containing more than $n$ elements, there must be some vertex $x$ in $\gamma$ where $\gamma$ changes direction.
For some $i$ we then have  $x\in f_{i+1}$ or $x$ occurs in $\gamma$ between an element of $f_i$ and an element of $f_{i+1}$. By Lemma~\ref{l:acl} we have
$x\in\cl(f_if_{i+1})\cap\cl(f_if_{i+2})$. Then
\[\indep{x}{f_i}{a_0a_1,\ldots a_iA},\] so $x\notin\cl(a_0a_1,\ldots a_iA)$, 
a contradiction.
\end{proof}

 The proof shows that in fact the following stronger ampleness result holds:
\begin{corollary} If $a_0,\ldots a_n$ are witnesses for $T_n$ being $n$-ample, then there are vertices $b_i\in\cl(a_i)$ such that $(b_0,\ldots b_n)$ is a flag.
\end{corollary}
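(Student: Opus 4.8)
The plan is to extract the needed flag directly from the proof of Theorem~\ref{T:notample}, since most of the work has already been done there. Given witnesses $a_0,\ldots,a_n$ for $T_n$ being $n$-ample (in Evans' sense), condition~(2) gives $\notind{a_n}{}{a_0}$, so by Corollary~\ref{c:forking} the vertex $a_n$ is connected to some vertex of $\cl(a_0)$; equivalently some vertex of $\cl(a_0)$ lies in the same connected component as some vertex of $\cl(a_n)$. Exactly as in the proof of Theorem~\ref{T:notample}, I would set $f_0=\proj(a_n/a_0)\in\cl(a_0)$ and $f_n=\proj(f_0/a_n)\in\cl(a_n)$, and then use condition~$3'$ together with Corollary~\ref{c:forking} to build inductively flags $f_i=\proj(f_n/a_i)\in\cl(a_i)$ for $i=1,\ldots,n-1$, with $\indep{f_n}{f_i}{f_0f_1\ldots f_i}$.

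Next I would invoke the existence of a reduced path $\gamma=(f_0,x_1,\ldots,x_k=f_n)$ passing through a vertex of each $f_i$ in ascending order, which comes from the construction of the $f_i$ as successive projections (each $f_{i+1}$ is reached from $f_i$ by a reduced path, and Remark~\ref{r:reducedpath} lets one splice these). The key claim is then that $\gamma$ must in fact be a flag: if it changed direction at some vertex $x$, the argument in the proof of Theorem~\ref{T:notample} shows $x\in\cl(f_if_{i+1})\cap\cl(f_if_{i+2})$ for some $i$, hence $x\notin\cl(a_0\ldots a_i)$ by the ampleness conditions applied to the $a_j$, contradicting $x\in\gamma\subseteq\cl(a_0\ldots a_n)$ (or rather contradicting the fact that $x$ lies on a reduced path between elements of the $f_j\subseteq\cl(a_j)$). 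Since $\gamma$ does not change direction, it is a dense flag, and in particular any subset of its vertices is a flag.

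Finally, choosing one vertex $b_i$ of $f_i$ lying on $\gamma$ for each $i=0,\ldots,n$ (with $b_0=f_0$ and $b_n=f_n$), the tuple $(b_0,\ldots,b_n)$ is a subset of the flag $\gamma$, hence itself a flag, and $b_i\in f_i\subseteq\cl(a_i)$. This is exactly the asserted conclusion.

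The main obstacle I anticipate is making precise the claim that $\gamma$ visits a vertex of each $f_i$ "in ascending order" and that no vertex of $\gamma$ is forced into an algebraic closure $\cl(a_0\ldots a_i)$ that the ampleness hypotheses on the $a_j$ forbid — in other words, carefully tracking how the independence statements $\indep{f_n}{f_i}{f_0\ldots f_i}$ and the $n$-ampleness conditions on $(a_0,\ldots,a_n)$ interact via Corollary~\ref{c:forking} and Lemma~\ref{l:acl}. This is precisely the bookkeeping already carried out in the proof of Theorem~\ref{T:notample}, so the corollary should follow by reading off that argument rather than by any new idea; the only genuinely new observation is that the contradiction there was obtained from a change of direction, so its absence yields a flag.
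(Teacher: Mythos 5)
Your proposal is correct and is essentially the paper's own argument: the paper derives this corollary simply by noting that the proof of Theorem~\ref{T:notample} already produces the flags $f_i\in\cl(a_iA)$ and a reduced path $\gamma$ through a vertex of each of them, and that the contradiction there arose precisely from a change of direction, so for $n$-ample witnesses $\gamma$ must be a dense flag. Reading off $b_i\in f_i\cap\gamma$, as you do, is exactly the intended conclusion.
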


\section{Buildings and the prime model of $T_n$}\label{sec:building}

We now turn towards constructing the prime model $M_n^0$ of $T_n$ as a Hrushovski-limit.
We will show that $M_n^0$ is the building associated to a right-angled Coxeter group.

For this purpose we now consider an expansion $L_n'$ of the language $L_n$ by binary
function symbols $f^i_k$. 
For an $L_n$-graph $A$ we put  $f^i_k(x,y)=z$ if $z$ is the $k^{th}$ element on a unique shortest $E_i$-path of length at least $k$ from $x$ to $y$ and $z=x$ otherwise.

We say that an $L_n$-graph $A$ is $E_i$-connected if the set $V_{i-1}(A)\cup V_i(A)$ is connected.

\begin{definition}
Let $\K'$ be the class of finite $L_n'$-graphs $A\in\K$ which are  $E_i$-connected for $i=1,\ldots n$ and additionally satisfy the following condition:
\begin{itemize}

\item[6.] For any $a\in A$ the residue $R(a)$  is $E_i$-connected for $i=1,\ldots n$.

\end{itemize}

\end{definition}

Note that $\K'$ is closed under finitely generated substructures by the choice of language.

\begin{definition} Let $A$ be a finite $L_n'$-graph which is 
$E_i$-connected for
$i=1,\ldots n$. The following extensions are called $1$-point 
strong extensions of $A$: 
\begin{enumerate} 
\item add a vertex of type $V_0$ or $V_n$ to $A$ which is
connected to at most one vertex of $A$ and such that the extension is still
$E_i$-connected for all  $i=1,\ldots n$. 
\item If $(x,y,z)$ is a dense flag in
$A$, add a vertex $y'$ of the same type as $z$ to $A$ such that $(x,y',z)$ is a
flag. 
\item if $A\subset V_0\cup V_n, |A|\leq 1$, add a vertex of appropriate type
which is connected to the vertex of $A$ if $A\neq\emptyset$. 
\end{enumerate}

Again we write $A\leq B$ if  $B$ arises from $A$  by finitely many $1$-point strong extensions.
\end{definition}

We next show that $(\K_n',\leq)$ has the amalgamation property for $\leq$-extensions.

\begin{lemma} If $A$ contains a flag of type $(V_1,\ldots V_{n-1})$ and 
$A\leq B,C$ are in $\K_n'$, then $D:=B\otimes_AC\in\K_n'$ and $B,C\leq D$.
\end{lemma}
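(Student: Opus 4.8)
The plan is to mimic the proof of the analogous amalgamation lemma for $(\K_n,\le)$, but now being careful about the extra condition~6 ($E_i$-connectedness of all residues) and the $E_i$-connectedness of the whole graph, which are what distinguish $\K_n'$ from $\K_n$. First I would observe that $B,C\le D$ follows exactly as before, together with the fact that free amalgamation over a set $A$ which already contains a flag of type $(V_1,\dots,V_{n-1})$ does not destroy $E_i$-connectedness: every $V_{i-1}$- or $V_i$-vertex of $B$ (resp.\ $C$) is connected to the fixed flag element of the appropriate level inside $B$ (resp.\ $C$), and these flag elements lie in $A$, so $D$ remains $E_i$-connected for each $i=1,\dots,n$. (This is exactly why the hypothesis "$A$ contains a flag of type $(V_1,\dots,V_{n-1})$" is imposed.) Since $D\in\K_n$ by the earlier free-amalgamation lemma, it remains only to verify condition~6 for $D$.

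So the core of the argument is: for every $a\in D$, the residue $R_D(a)$ is $E_i$-connected for $i=1,\dots,n$. Here I would split into cases according to whether $a\in A$, $a\in B\setminus A$, or $a\in C\setminus A$. If $a\in B\setminus A$, then because $D$ is the free amalgam, $a$ has no edges to $C\setminus A$, and a short argument shows $R_D(a)=R_B(a)$ together with those vertices of $C$ incident to $a$ only through vertices of $A\cap R_B(a)$ — in fact incidence with $a$ can only be witnessed by dense flags through $a$'s neighbours, which lie in $B$, so I expect $R_D(a)$ to be "generated" by $R_B(a)$ and the residues (inside $C$) of the elements of $A$ that lie in $R_B(a)$. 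Using that $R_B(a)$ is $E_i$-connected (as $B\in\K_n'$) and that $A\cap R_B(a)$ again contains an appropriate flag so that the pieces coming from $C$ attach connectedly, one concludes $R_D(a)$ is $E_i$-connected; the case $a\in C\setminus A$ is symmetric. The genuinely delicate case is $a\in A$: here $R_D(a)$ contains contributions from both $B$ and $C$, and one must check that $R_B(a)$ and $R_C(a)$ are glued along $R_A(a)$ in an $E_i$-connected way. I would argue that $R_A(a)$ itself contains a flag of the relevant type (this should follow from $A$ containing the global flag of type $(V_1,\dots,V_{n-1})$, possibly after noting $a$ is incident to suitable flag elements), and then each of $R_B(a)$, $R_C(a)$ is $E_i$-connected and meets $R_A(a)$ in something containing a level-$i$/level-$(i-1)$ vertex, so their union is $E_i$-connected.

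The main obstacle I anticipate is the precise bookkeeping of what $R_D(a)$ actually \emph{is} in a free amalgam: incidence is defined via dense flags, so a vertex $x\in C\setminus A$ can be incident to $a\in B\setminus A$ only if there is a dense flag from $a$ to $x$, and since no edge crosses between $B\setminus A$ and $C\setminus A$, such a flag must pass through $A$; one needs that this forces $x$ to be incident (inside $C$) to an element of $A\cap R_B(a)$, and conversely. Making this "residues of a free amalgam are the free-amalgam-like union of residues" statement precise — and checking it interacts correctly with the $\wedge$/$\vee$ operations and with the flag hypothesis — is where the real work lies. Once that structural description of $R_D(a)$ is in hand, the $E_i$-connectedness claims reduce to the observation that gluing two $E_i$-connected graphs along a common subgraph that contains a vertex of level $i-1$ or $i$ yields an $E_i$-connected graph, which is immediate. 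I would therefore structure the proof as: (i) $B,C\le D$ and $D$ globally $E_i$-connected; (ii) $D\in\K_n$ by the previous lemma; (iii) structural description of $R_D(a)$ in the three cases; (iv) deduce condition~6 from $E_i$-connectedness of the pieces plus the flag hypothesis.
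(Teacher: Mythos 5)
There is a genuine gap in the core of your argument, namely the verification of condition 6 for $D$. Your plan is to analyse $R_D(a)$ directly in the free amalgam, and in the "delicate case" $a\in A$ you propose to argue that $R_A(a)$ contains a flag of the relevant type, deducing this from the hypothesis that $A$ contains a flag of type $(V_1,\ldots,V_{n-1})$. That deduction does not work: the flag hypothesis only guarantees that $A$ (and hence $D$) is globally $E_i$-connected for each $i$; it says nothing about residues, since the distinguished flag need not be incident to a given $a\in A$ (one easily builds $A\in\K_n'$ containing the flag and a vertex $a$ whose residue in $A$ misses it entirely). Moreover, your gluing principle is stated too loosely: to conclude that $R_B(a)\cup R_C(a)$ is $E_i$-connected you need the two residues to overlap in a vertex of $V_{i-1}\cup V_i$ (indeed, in a way that connects the $(V_{i-1}\cup V_i)$-parts), for every $i$ simultaneously, and a single common vertex of some other level does not suffice. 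What actually makes condition 6 survive amalgamation is the very restrictive form of the $1$-point strong extensions defining $\leq$ in $\K_n'$: new vertices of intermediate type only arise from a dense flag already present (type 2), and new $V_0$- or $V_n$-vertices attach to at most one old vertex, so a vertex $a\in A$ only acquires new incidences alongside ones it already has, and each single step visibly preserves $E_i$-connectedness of all residues. Your proposal never invokes this inductive structure of $A\leq B$ and $A\leq C$, which is precisely the missing ingredient; your step (ii), citing the earlier lemma to get $D\in\K_n$, has the same flavour of problem, since that lemma was proved for the $\K_n$-notion of strong extension, not the $L_n'$-notion used here.

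By contrast, the paper's proof is a short induction along the $1$-point strong extensions: $D$ is obtained from $B$ by performing the same $1$-point strong extensions that build $C$ from $A$ (freeness of the amalgam guarantees they remain legitimate strong extensions over the larger base), and one checks that each type of $1$-point strong extension preserves membership in $\K_n'$, including condition 6; the flag in $A$ is used only to keep $D$ globally $E_i$-connected. If you want to keep your direct structural description of residues in the free amalgam, you would have to prove, using the back-and-forth decomposition of dense flags at $A$-vertices together with the classification of $1$-point extensions, that whenever $R_B(a)\setminus A$ and $R_C(a)\setminus A$ are both nonempty, the intersection $R_B(a)\cap R_C(a)\supseteq R_A(a)$ already meets $V_{i-1}\cup V_i$ appropriately for each $i$; as it stands, that statement is asserted but not established.
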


\begin{proof}

Clearly, $B,C\leq D$ and $D$ is $E_i$-connected for all $i=1,\ldots n$ since $A$ contains a flag. 
To see that $D\in\K$, note that if $B\in \K$ and $B'$ is an
$1$-point strong extension of $B$, then also $B'\in\K$.
\end{proof}

This shows that the class $(\K,\leq)$ has a Hrushovski limit $M^0_n$.
Clearly, $M^0_n$ is $E_i$-connected for $i=1,\ldots n$ 
and since any two vertices of $M_n^0$ lie in a
maximal flag, it follows that $M_n^0$ is in fact $n$-connected. Note that an $L_n'$-substructure of $M_n^0$ is 
automatically nice, see Remark~\ref{r:qftp}.

The same proof as in the case of $M_n$ shows the first part of
the following proposition:

\begin{proposition}\label{p:primmodell}
The Hrushovski limit $M^0_n$ is a model of $T_n$. Furthermore
 $M^0_n$ is the unique countable model
of $T_n$ which is $E_i$-connected for $i=1,\ldots n$ and
such that every vertex is contained in a maximal flag.
\end{proposition}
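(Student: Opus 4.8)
The plan is to prove the two assertions of Proposition~\ref{p:primmodell} separately: first that $M^0_n$ is a model of $T_n$, and then the uniqueness/characterization statement. For the first part, I would follow verbatim the proof of Proposition~\ref{p:pseudospace}: since $\K'_n\subseteq\K_n$ and the $1$-point strong extensions of the $L'_n$-construction are a subclass of the $L_n$-ones (forgetting the function symbols $f^i_k$), the verification that $(\Sigma1)_n$--$(\Sigma4)_n$ hold in $M^0_n$ is unchanged; the $E_i$-connectedness of $M^0_n$ and of every residue, and the fact that every vertex lies in a maximal flag, follow directly from closure of $\K'_n$ under the relevant $1$-point strong extensions (types 2 and 3) together with $\K'_n$-saturation of the limit.

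For the uniqueness part, let $N$ be any countable model of $T_n$ that is $E_i$-connected for $i=1,\dots,n$ and in which every vertex lies in a maximal flag. I would show $N\cong M^0_n$ by a back-and-forth argument between finite $L'_n$-substructures. The key point is that a finite subset $A$ of such a model which is closed under the $f^i_k$ (hence nice, by Remark~\ref{r:qftp}) and which contains a maximal flag is automatically a strong substructure lying in $\K'_n$: closure under $f^i_k$ forces condition 6 on residues to be inherited correctly once one checks that residues in $N$ are themselves $E_i$-connected, which follows from $(\Sigma2)_n$ applied inductively together with the hypothesis on $N$. Then, given a finite $\leq$-substructure $A\leq N$ with $A\leq B\in\K'_n$ a $1$-point strong extension, one uses $\omega$-saturation-free realization: the three types of $1$-point strong extension each correspond to adding a single vertex with a prescribed (quantifier-free, hence by Remark~\ref{r:qftp} complete) type over $A$, and the hypotheses on $N$ (connectedness, maximal flags through every vertex) guarantee the needed witnesses exist in $N$ even though $N$ need not be $\omega$-saturated — this is exactly where connectedness is used, e.g. to find, for a dense flag $(x,y,z)$, a further vertex $y'$ with $(x,y',z)$ a flag, and to keep everything $E_i$-connected. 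Since both $N$ and $M^0_n$ are $\K'_n$-saturated in this weak sense and are countable, a standard back-and-forth over finite $L'_n$-substructures containing a common maximal flag produces an isomorphism.

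I expect the main obstacle to be the uniqueness half, specifically verifying that an arbitrary model $N$ with the stated connectedness/flag properties is genuinely $\K'_n$-saturated without assuming $\omega$-saturation. One must check that each axiom-driven or strong-extension-driven requirement (a missing neighbour of a $V_0$- or $V_n$-vertex, a missing vertex completing a dense flag, an $E_i$-connecting path of the correct length as guaranteed by $(\Sigma4)_n$) is actually realized \emph{inside} $N$, and that the newly added vertex together with $A$ again forms a structure in $\K'_n$ — in particular that condition 6 and $E_i$-connectedness are preserved. The hypothesis that every vertex of $N$ lies in a maximal flag is what makes residues rich enough, and I would isolate a lemma to that effect before running the back-and-forth. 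Once that lemma is in place the argument is routine, mirroring Lemma~\ref{l:K-sat} and its corollary.
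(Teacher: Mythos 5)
Your first half is fine and coincides with the paper, which indeed disposes of ``$M^0_n\models T_n$'' by the same argument as Proposition~\ref{p:pseudospace}. For the uniqueness half you take a genuinely different route: the paper does \emph{not} run a back-and-forth, but proves Theorem~\ref{t:building} (that $M^0_n$ is a building of type $A_{\infty,n+1}$ with all residues of cardinality $\aleph_0$), observes via the subsequent corollary that \emph{any} model of $T_n$ which is $E_i$-connected with every vertex in a maximal flag is such a building, and then quotes Proposition 5.1 of \cite{HP}, which says that a building of this type is determined up to isomorphism by its Coxeter diagram and the cardinalities of its residues. A direct Fra\"iss\'e-theoretic back-and-forth as you propose could in principle replace that appeal, but the two points you treat as routine are exactly the substantive content, and as stated they are not justified.

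First, you assert that residues of $N$ are $E_i$-connected ``by $(\Sigma 2)_n$ applied inductively together with the hypothesis on $N$''. $(\Sigma 2)_n$ contains no connectedness assertion, and global $E_i$-connectedness does not localize to residues by itself; what actually forces residue-connectedness is the cycle axiom $(\Sigma 4)$: if $u,v\in R(a)$ lay in different components of $R(a)$, an ambient path between them (which exists since $N$ is $E_i$-connected) closes up through $a$ to a simple cycle not contained in $R(a)$, and $(\Sigma 4)$ then yields a path from $u$ to $v$ inside $R(a)$; for residue vertices at non-adjacent levels this still needs an induction. This must be isolated and proved, and with the correct axiom. Second, and more seriously, closure under the functions $f^i_k$ only pulls in \emph{ambient} $E_i$-paths, not residue-internal ones, so a finite $L_n'$-substructure of $N$ need not satisfy condition 6 and hence need not lie in $\K_n'$; and in the ``forth'' direction you must show that an arbitrary vertex $w$ of $N$ --- in particular one of interior level $V_i$, $0<i<n$, which under the restricted extension moves can only ever enter as the completion of a dense flag already present in the finite set --- can be absorbed by a chain of $1$-point strong extensions while keeping the finite set in $\K_n'$ and strong in $N$. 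That reachability statement is essentially gallery-connectedness of the chamber system of $N$ together with connectedness of its residues, i.e.\ precisely what the paper's identification of such models with buildings (and the citation of \cite{HP}) provides; ``a standard back-and-forth'' does not supply it. Until these two lemmas are proved, the uniqueness half of your argument has a genuine gap.
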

 
(Note that in \cite{BP} the corresponding Remark 3.6 of uses Lemma 3.2, 
which is not correct as phrased there: $M_n^0$ and $M_n^0\cup\{a\}$ with $a$ an isolated point are not isomorphic, but satisfy the assumptions of Remark 3.6.)

The uniqueness part of Proposition~\ref{p:primmodell} follows directly from the following  theorem and Proposition 5.1 of \cite{HP} which states that this type of building is uniquely determined by its associated Coxeter group and the 
cardinality of the residues.

\begin{theorem}\label{t:building}
$M^0_n$ is a building of type $A_{\infty,n+1}$ all of whose residues have cardinality $\aleph_0$.
\end{theorem}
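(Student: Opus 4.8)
The plan is to exhibit $M_n^0$ together with its maximal flags as an abstract building and then identify its type. Recall that a building can be presented as a set of chambers (here: maximal flags $(x_0,\ldots,x_n)$) equipped with a $W$-valued distance function $\delta$ satisfying the usual axioms, where $W$ is the Coxeter group of type $A_{\infty,n+1}$. That Coxeter group is the one whose diagram is an infinite grid: generators $s_{i,j}$ (one for each pair consisting of a ``level'' $i\in\{0,\ldots,n\}$ and a ``position'' in an infinite line), with $s_{i,j}$ and $s_{i',j'}$ commuting unless they are adjacent in the grid. Two maximal flags are $i$-adjacent precisely when they agree in all coordinates except the $i$-th; this is well-defined because by $(\Sigma 2)_n$ (and its iterates) the residue of a sub-flag $(x_0,\ldots,x_{i-1},\widehat{x_i},x_{i+1},\ldots,x_n)$ is a free pseudoplane, and in a free pseudoplane any two vertices over a fixed neighbour, or over nothing, are connected, so $i$-adjacency classes are nonempty and the residues have cardinality $\aleph_0$ (the valencies are infinite by construction of the Hrushovski limit).

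First I would set up the chamber system: chambers are maximal flags, the adjacency relations $\sim_i$ are as above, and I would check it is a chamber system over the index set $\{0,\ldots,n\}$ (each $\sim_i$ is an equivalence relation, distinct chambers are $i$-adjacent for at most one $i$). Next I would define galleries and reduced galleries and use Lemma~\ref{l:acl}, together with the reduced-path analysis in Definition~\ref{def:reduced} and Remark~\ref{r:reducedpath}, to show that a minimal gallery between two chambers corresponds exactly to a reduced word in $W$: the key point is that moving along a gallery amounts to walking through the graph $M_n^0$ changing one vertex of the current flag at a time, and the combinatorics of when such moves can be shortened is governed precisely by $(\Sigma 4)_n$ and its level-shifted versions (the ``simple cycle not contained in $R(a)$ yields a shorter path'' condition is what forces the folding/exchange behaviour). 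This lets me define $\delta\colon \mathrm{Ch}\times\mathrm{Ch}\to W$ by sending a pair of chambers to the element of $W$ spelled out by any minimal gallery between them, after checking this is independent of the chosen minimal gallery.

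Then I would verify the building axioms for $(\mathrm{Ch},\delta)$: (i) $\delta(C,D)=1$ iff $C=D$; (ii) if $\delta(C,D)=w$ and $C'\sim_i C$ then $\delta(C',D)\in\{w,s_iw\}$, with the second holding whenever $\ell(s_iw)>\ell(w)$ — this is where $E_i$-connectedness of $M_n^0$ and of all residues (the defining properties of $\K_n'$, built into the Hrushovski limit) and the cycle axioms $(\Sigma 4)$ are used to realise the ``there exists a chamber at distance $s_iw$'' half; (iii) given $C,D$ with $\delta(C,D)=w$ and $i$ with $\ell(s_iw)=\ell(w)-1$, there is $C'\sim_i C$ with $\delta(C',D)=s_iw$. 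Axiom (iii) and the first half of (ii) are the ``geometric'' content and should come out of the path-shortening in Definition~\ref{def:reduced}; the second half of (ii) — the existence of a genuinely new chamber — uses that the relevant residue (a free pseudoplane) has infinitely many vertices over each neighbour. Identifying $W$ as the Coxeter group of type $A_{\infty,n+1}$ is essentially reading off the commutation relations: flags differing only at levels $i$ and $j$ can be changed independently when $|i-j|\ge 2$ (the changes happen in disjoint parts of the flag), giving $s_is_j=s_js_i$, whereas at adjacent levels $i,i+1$ the changes interact through the shared incidences, giving the $A_\infty$ strands — and this is exactly encoded by the axioms relating $R(x)$ for $x\in V_0$ and $x\in V_n$ to lower-dimensional pseudospaces.

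I expect the main obstacle to be the well-definedness and axiom (ii)/(iii) verification for $\delta$, i.e. proving that any two minimal galleries between the same pair of chambers spell $W$-equivalent (indeed equal) reduced words and that one can always perform the required elementary homotopies. Concretely this reduces to a careful induction on the number of levels involved (exactly the induction structure already used in the proofs of Lemma~\ref{l:acl} and Proposition~\ref{l:treeacl}), where the base case is the free pseudoplane and the inductive step peels off the top or bottom level using self-duality and $(\Sigma 2)_n$, invoking the cycle axioms $(\Sigma 4)$ each time a gallery needs to be folded. Once $(\mathrm{Ch},\delta)$ is shown to be a building, the statement that its residues have cardinality $\aleph_0$ is immediate from the infinite valencies, and the identification of the type follows from the diagram computation above.
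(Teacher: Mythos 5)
There are two genuine problems. First, you misidentify the Coxeter group: in this paper $A_{\infty,n+1}$ denotes the diagram on the $n+1$ generators $t_0,\ldots,t_n$ (one generator per level, i.e.\ per type of panel), with $t_i^2=1$, with $t_it_k$ of order $2$ exactly when $|i-k|\geq 2$, and with $t_it_{i+1}$ of infinite order; it is \emph{not} an ``infinite grid'' with generators $s_{i,j}$ indexed by a level together with a position in an infinite line --- the Weyl group of a building has one generator per element of the type set, and here there are only $n+1$ types. Your concluding ``diagram computation'' quietly reverts to generators indexed by levels, but the identification of the type as you state it at the outset is wrong.

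Second, and more seriously, the actual mathematical content of the theorem is exactly the step you leave open: well-definedness of $\delta$, i.e.\ that any two reduced galleries between the same pair of chambers have types representing the same element of $W$, which the paper reformulates as the claim that there is no nontrivial closed reduced gallery. Saying this ``should come out of the path-shortening'' and ``reduces to a careful induction on the number of levels'' is not an argument; note in particular that reduced paths in the sense of Definition~\ref{def:reduced} and Lemma~\ref{l:acl} are vertex paths in the coloured graph, not galleries of maximal flags, and transferring between the two notions is not automatic. The paper's proof is a specific acyclicity-plus-commutation descent: since $V_{n-1}\cup V_n$ is a free pseudoplane (no cycles), the sequence of $V_n$- and $V_{n-1}$-vertices along a closed gallery must backtrack palindromically, giving two occurrences of $t_n$ in the gallery type not separated by $t_{n-1}$; since $t_n$ commutes with every $t_i$, $i\neq n-1$, and the type is reduced, this yields two occurrences of $t_{n-1}$ not separated by $t_n$; the subgallery between them keeps its $V_n$- and $V_{n-1}$-vertices fixed, so the argument repeats one level down, eventually producing two unseparated occurrences of $t_1$ and contradicting reducedness. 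Some such argument is indispensable and is absent from your proposal. Finally, observe that with the paper's Definition~\ref{d:building} the verifications (ii) and (iii) you plan are unnecessary: once $\delta$ is well-defined, the second building axiom holds by construction (and axiom 1, including the $\aleph_0$ panel size, is easy), so essentially all of the work is concentrated in precisely the step you did not carry out.
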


Recall the following definitions (see e.g. \cite{G}).
Let $W$ be the Coxeter group

\[W=\langle t_0,\ldots t_n\colon t_i^2=(t_it_k)^2=1, i,k=0\ldots n, |k-i|\geq 2 \rangle,\]
 whose associated diagram we call $A_{\infty,n+1}$.

\begin{definition}\label{d:building}
A building of type  $A_{\infty,n+1}$ is a set $\Delta$ with a \emph{Weyl distance function}
$\delta: \Delta^2\to W$ such that the following two axioms hold:
\begin{enumerate}
 \item For each $s\in S:=\{t_i,i=0,\ldots n\}$, the relation $x\sim_s y$ defined by
$\delta(x,y)\in\{1,s\}$ is an equivalence relation on $\Delta$ and each equivalence class
of $\sim_s$ has at least $2$ elements.
\item Let $w=r_1r_2\ldots r_k$ be a shortest representation of $w\in W$ with $r_i\in S$
and let $x,y\in\Delta$. Then $\delta(x,y)=w$ if and only if there exists a sequence
of elements $x,x_0,x_1,\ldots, x_k=y$ in $\Delta$ with $x_{i-1}\neq x_i$ and
$\delta(x_{i-1},x_i)=r_i$ for $i=1,\ldots, k$. 
\end{enumerate}

\end{definition}

A sequence as in 2. is called a \emph{gallery of type} $(r_1,r_2,\ldots, r_k)$.
The gallery is called reduced if the word $w=r_1r_2,\ldots, r_k$ is reduced, i.e.
a shortest representation of $w$.

\medskip

We now show how to consider $M_n^0$ as a building of type  $A_{\infty,n+1}$.

\begin{proof} (of Theorem~\ref{t:building})
We extend the set of edges of the $n+1$-coloured graph $M_n^0$ by putting edges between any
two vertices that are incident in the sense of Definition~\ref{d:incidence}\ref{d:incidence1}.
In this way, flags of $M_n^0$ correspond to a complete subgraph of this extended
graph, which thus forms a simplicial complex. A maximal simplex consists of $n+1$ vertices each of a different type $V_i$. (Such a simplex is called a \emph{chamber}.) Let $\Delta$ be the set of maximal simplices in this graph.
Define $\delta:\Delta^2\to W$ as follows:

Put $\delta(x,y)=t_i$ if and only if the flags $x$ and $y$ differ exactly in the vertex of type $V_i$. Extend this  by putting
$\delta(x,y)=w$ for a reduced word $w=r_1r_2\ldots r_k$ if and only if there exists a sequence
of elements $x=x_0,x_1,\ldots, x_k=y$ in $\Delta$ with $x_{i-1}\neq x_i$ and
$\delta(x_{i-1},x_i)=r_i$ for $i=1,\ldots, k$.

Clearly, with this definition of $\delta$, the set $\Delta$ satisfies the first
condition of Definition~\ref{d:building}. In fact, for all $s\in S$ every equivalence class $\sim_s$ has cardinality $\aleph_0$.
We still need to show that $\delta$ is well-defined, i.e. we have to show the following for any $x,y\in\Delta$: 
if there are reduced galleries $x_0=x,x_1,\ldots, x_k=y$ and $y_0=x,y_1,\ldots, y_m=y$ of type $(r_1,r_2,\ldots,r_k)$ and $(s_1,\ldots s_m)$, respectively,
then in $W$ we have $r_1r_2\ldots r_k=s_1\ldots s_m$.
Equivalentely, we will show the following, which completes the proof of Theorem~\ref{t:building}:

\medskip
\noindent
{\bf Claim:} There is no reduced gallery $a_0,a_1,\ldots, a_k=a_0$ for $k>0$ in $M_n^0$.

\medskip
\noindent
\emph{Proof of Claim.} 
Suppose otherwise. Let  $a_0,a_1,\ldots, a_k=a_0$ be a reduced gallery of type $(r_1,\ldots r_k)$ for some $k>0$. Note that the flags $a_i$ and $a_{i+1}$ contain the same vertex of
type $V_j$ as long as $r_i\neq t_j$. 

Now consider the sequence of vertices of type $V_n$ and $V_{n-1}$ occurring in this gallery. Since $V_n\cup V_{n-1}$ contains no cycles, the sequence of vertices of type $V_n$ and $V_{n-1}$ occurring in this gallery will be of the
form 
\begin{equation}\label{eq:gallery}
  (x_1,y_1,x_2,y_2,\ldots ,x_i,y_i,x_i,y_{i-1},\ldots y_1,x_1)
\end{equation} 
  with $x_i\in V_n,y_i\in V_{n-1}$ and $x_i$ a neighbour of $y_i$ and $y_{i-1}$ in the original graph.
This implies that at some place in the gallery type there are two occurrences of $t_n$ which are not separated by an occurrence of $t_{n-1}$ (or conversely). Since $t_n$ commutes with all $t_i$ for $i\neq n-1$ and the word $r_1\ldots r_k$ is reduced, there are two occurrences of $t_{n-1}$ which are not separated by an occurrence of $t_n$, say $r_j,r_{j+m}=t_{n-1}$ with $r_{j+1},\ldots, r_{j+m-1}\neq t_n$. 

We now consider the gallery $a_j,\ldots a_{j+m}$ of type $(r_j=t_{n-1},r_{j+1},\ldots, r_{j+m}=t_{n-1})$. Notice that by (\ref{eq:gallery}), the flags $a_j$ and $a_{j+m}$ have the same $V_n$ and the
same $V_{n-1}$ vertex. Since $M_n^0$ does not contain any $E_{n-1}$-cycles, 
the sequence of $V_{n-1}$- and $V_{n-2}$-vertices
appearing in this sequence must again be of the same form as in (\ref{eq:gallery}).
Exactly as before we find two occurrences\footnote{If $t_{n-2}$ does not occur in the type of the gallery, this would contradict the assumption that the type is reduced since $t_{n-1}$ commutes with all $t_i$ for $i\neq n,n-2$.} of $t_{n-2}$ in the gallery type of $a_j,\ldots a_{j+m}$ which are not separated by an occurrence of $t_{n-1}$.
 Continuiung in this way, we eventually find two occurrences of $t_1$ which are not separated by any $t_i$. Since $t_1^2=1$ this contradicts the assumption that the gallery be reduced.
\end{proof}

The proof shows in fact the following:
\begin{corollary}
A model of $T_n$ is a building of type $A_{\infty,n+1}$ if and only if
it is $E_i$-connected for all $i$ and every vertex is contained in a maximal flag.
\end{corollary}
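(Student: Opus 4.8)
The statement to prove is the Corollary: a model $M$ of $T_n$ is a building of type $A_{\infty,n+1}$ if and only if it is $E_i$-connected for all $i=1,\ldots,n$ and every vertex lies in a maximal flag. One direction is essentially immediate from Theorem~\ref{t:building} together with Proposition~\ref{p:primmodell}. Indeed, if $M$ is the building of type $A_{\infty,n+1}$ with all residues of cardinality $\aleph_0$ (which by Proposition 5.1 of \cite{HP} is unique), then by Proposition~\ref{p:primmodell} it is the prime model $M_n^0$, which is built precisely to be $E_i$-connected for all $i$ and to have every vertex in a maximal flag. But we want it for an \emph{arbitrary} model $M$, not just the countable prime one, so the plan is to extract from the proof of Theorem~\ref{t:building} exactly which features of $M_n^0$ were used.

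\textbf{Forward direction (building $\Rightarrow$ connectivity and flags).} Suppose $M\models T_n$ is (as a set of chambers with the Weyl metric) a building of type $A_{\infty,n+1}$. Every vertex of $M$ (i.e.\ every vertex of type $V_i$) is a panel of some chamber, hence lies in a maximal flag: this is the standard fact that in a building every simplex is contained in a chamber. For $E_i$-connectedness, note that two vertices $u,v$ of type in $\{V_{i-1},V_i\}$ are $E_i$-connected in the graph sense exactly when the corresponding panels can be joined by a gallery using only moves of type $t_{i-1},t_i$ (equivalently, a gallery staying in the residue obtained by fixing all other vertices of a flag through $u$); since the building is connected, and galleries of arbitrary type exist, one deduces $E_i$-connectedness. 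The point to be careful about: $E_i$-connectedness of the whole structure, not merely of residues — but this follows since $M$ contains a maximal flag and one can slide along each $t_j$ in turn.

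\textbf{Reverse direction (connectivity and flags $\Rightarrow$ building).} This is the substantive direction. Assume $M\models T_n$ is $E_i$-connected for all $i$ and every vertex lies in a maximal flag. Form the simplicial complex of flags exactly as in the proof of Theorem~\ref{t:building}, let $\Delta$ be its set of chambers, and define $\delta\colon\Delta^2\to W$ by the same recipe ($\delta(x,y)=t_i$ when $x,y$ differ only in their $V_i$-vertex, extended along reduced galleries). Axiom~1 of Definition~\ref{d:building} holds as before, using that every panel lies in at least two chambers — this needs the pseudospace axioms (every vertex has infinitely many neighbours at the bottom level, and $(\Sigma 2)$ propagates this), which hold in \emph{any} model of $T_n$. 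For Axiom~2 and well-definedness of $\delta$, one must show there is no reduced closed gallery in $M$; and here is the key observation — \textbf{the proof of the Claim in Theorem~\ref{t:building} uses only the axioms of $T_n$}, namely that $V_{n}\cup V_{n-1}$ (and more generally $V_{j-1}\cup V_j$) contains no cycles, the commutation relations in $W$, and the inductive descent through levels via $(\Sigma 4)$-type cycle-shortening. None of this used countability or $\omega$-saturation of $M_n^0$. So the same argument, run verbatim in $M$, shows no reduced closed gallery exists, whence $\delta$ is well-defined; that $\delta$ realizes all of $W$ follows from $E_i$-connectedness (to realize a letter $t_i$ one needs a second chamber in a given panel's residue, i.e.\ $E_i$-connectedness plus the existence of flags) together with composition along galleries. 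Thus $(\Delta,\delta)$ is a building of type $A_{\infty,n+1}$.

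\textbf{Main obstacle.} The real work is bookkeeping: verifying that every step of the Claim's proof in Theorem~\ref{t:building} is a consequence of $T_n$ alone and not of the special construction of $M_n^0$. In particular one should check that the ``no $E_i$-cycles'' condition and the cycle-shortening axioms $(\Sigma 4)_n$ (and their consequences listed after the definition of $T_n$) suffice to force the sequence of $V_j$/$V_{j-1}$ vertices in a closed gallery into the pinched form \eqref{eq:gallery}, and that the reduction to fewer levels works inside an abstract $M$. Once that is granted, the rest is the standard dictionary between buildings-as-chamber-systems and buildings-with-Weyl-distance, plus the observation that the hypotheses (every vertex in a maximal flag; $E_i$-connectedness) are exactly what is needed for the chamber graph to be nonempty, have thick panels, and be gallery-connected. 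I would also remark that the cardinality of the residues equals the common (infinite) valency, so if $M$ is in addition $\omega$-saturated the residues have size continuum and if $M=M_n^0$ they have size $\aleph_0$; but the corollary as stated asserts only ``is a building'', so no cardinality claim is needed.
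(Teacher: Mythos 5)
Your proposal is correct and matches the paper's intent exactly: the paper derives this corollary by observing that the proof of Theorem~\ref{t:building} uses only the axioms of $T_n$ together with $E_i$-connectedness and the existence of a maximal flag through every vertex, which is precisely your reverse direction, and your forward direction (every simplex lies in a chamber; gallery-connectedness projects to $E_i$-paths) is the routine converse the paper leaves implicit. No gaps; the elaboration about residue cardinalities being irrelevant to the statement is also accurate.
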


\begin{theorem}
The building $M_n^0$ is the prime model of $T_n$

\end{theorem}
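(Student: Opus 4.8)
The plan is to show that $M_n^0$ is prime over the empty set by exhibiting it as the atomic, countable model of $T_n$, and then invoking the standard fact that an atomic countable model of a complete theory is prime. Concretely, since $T_n$ is $\omega$-stable by Theorem~\ref{T:stable}, it has a prime model, and that prime model is the unique countable atomic model; so it suffices to prove that $M_n^0$ is atomic, i.e. that every finite tuple from $M_n^0$ has an isolated type over $\emptyset$.

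First I would reduce to the quantifier-free type. By Remark~\ref{r:qftp} and the observation that $L_n'$-substructures of $M_n^0$ are nice (hence strong), the type of a finite tuple $\bar a$ in $M_n^0$ is determined by its quantifier-free $L_n$-type together with the data recording which vertices are connected by $E_i$-paths and shortest-path information --- but all of this is already encoded, because in $M_n^0$ the closure $\cl(\bar a)$ is finite (by Lemma~\ref{l:acl} and Proposition~\ref{l:treeacl}) and by Lemma~\ref{l:strongext} $\bar a$ is contained in a finite nice, hence strong, subset $B$ of $M_n^0$. Since the quantifier-free type over a finite strong set determines the full type, it is enough to show that the isomorphism type of such a finite strong $B$, together with the relevant connectivity witnesses, is isolated by a single formula.

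The key step is then to package the finitely much ``global'' information that the embedding $B \hookrightarrow M_n^0$ is strong and that $M_n^0$ is $E_i$-connected with every vertex in a maximal flag. Here I would use the uniqueness clause of Proposition~\ref{p:primmodell}: $M_n^0$ is the unique countable model of $T_n$ that is $E_i$-connected for all $i$ and in which every vertex lies in a maximal flag. Given a finite strong $B \subseteq M_n^0$, I claim there is an $L_n$-formula $\varphi_B(\bar x)$ over $\emptyset$ asserting: (i) $\bar x$ enumerates a substructure isomorphic to $B$; (ii) $\bar x$ is strong, equivalently nice, which by Remark~\ref{l:strong=nice} and the definition of strong extension is a finite condition --- it says that no vertex of $\bar x$ has an $E_i$-path leaving $\bar x$ and returning, and each connected pair in $\bar x$ is already connected inside $\bar x$, and these are expressible because $\bar x$ is finite and any violation of niceness produces a cycle or a short detour controlled by $(\Sigma 4)_n$. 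Any realization of $\varphi_B$ in a model elementarily equivalent to $M_n^0$, in particular in $M_n^0$ itself, gives a strong embedding of $B$; since all strong embeddings of $B$ into $M_n^0$ are conjugate under $\Aut(M_n^0)$ by the Fra\"iss\'e property, $\varphi_B$ isolates the type of $\bar a$. Thus every finite tuple has isolated type, $M_n^0$ is atomic and countable, hence prime.

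The main obstacle I anticipate is verifying carefully that niceness (strongness) of a fixed finite $B$ really is first-order expressible as a single formula $\varphi_B$: one must check that the absence of $E_i$-paths out of and back into $B$ can be detected at bounded length, and this is exactly where axiom $(\Sigma 4)_n$ is essential, since it bounds the length of detours in a residue by the length of the cycle, so any potential failure of niceness of $B$ inside $M_n^0$ would already be visible within a radius depending only on $|B|$. Once that boundedness is in hand, the rest is the routine ``atomic $+$ countable $\Rightarrow$ prime'' argument together with the homogeneity of the Fra\"iss\'e limit, and the identification of $M_n^0$ with the $\omega$-stable prime model follows since a complete $\omega$-stable (indeed totally transcendental) theory has a unique countable atomic model.
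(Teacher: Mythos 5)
Your overall strategy (countable $+$ atomic $\Rightarrow$ prime) is legitimate, but the step on which everything hinges fails: niceness (strongness) of a finite tuple is \emph{not} expressible by a single first-order formula $\varphi_B$. Niceness of $\bar x$ includes assertions of the form ``these two vertices of $\bar x$ are not joined by any $E_i$-path (resp.\ by any path at all) outside $\bar x$'', and ``not connected'' is an infinite conjunction of statements ``not connected by a path of length $\leq k$''. Axiom $(\Sigma 4)_n$ does not rescue this: it bounds the length of a detour inside a residue \emph{relative to the length of a given cycle}, but it gives no bound on the distance between two arbitrary connected vertices, so a failure of niceness is not visible within a radius depending only on $|B|$. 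In fact, if your claim were true, the type of two unconnected vertices (a perfectly nice two-element set) would be isolated; it is not, since for every $k$ there are pairs at distance $>k$ which satisfy any candidate isolating formula but have a different type. This is precisely why, in Proposition~\ref{p:primmodell}, the prime model has to be $E_i$-connected with every vertex in a maximal flag. A second, smaller issue: conjugacy of strong embeddings under $\Aut(M_n^0)$ only controls realizations of $\varphi_B$ inside $M_n^0$; for isolation you must control realizations in arbitrary models, which again forces you back to Remark~\ref{r:qftp} and hence to knowing that realizations are nice.

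The repair is the idea the paper actually uses: instead of an arbitrary finite nice set $B$, extend the given tuple inside $M_n^0$ to the vertex set of a gallery joining maximal flags through it. Such a set is $E_i$-connected for every $i$ \emph{with the connectivity witnessed by edges inside the set}; consequently any tuple with the same quantifier-free diagram is automatically nice in any model of $T_n$ (uniqueness of $E_i$-paths follows from the absence of $E_i$-cycles, and all required connections are internal), so by Remark~\ref{r:qftp} the quantifier-free diagram --- a single formula --- isolates the type. The paper then concludes directly, without passing through atomicity: starting from a maximal flag of an arbitrary model of $T_n$ and a maximal flag of $M_n^0$, one embeds $M_n^0$ elementarily by moving along galleries. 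So the missing ingredient in your write-up is the use of maximal flags and galleries (equivalently, internal $E_i$-connectedness) in place of mere niceness; with that substitution your atomicity argument goes through, and without it the formula $\varphi_B$ you need does not exist.
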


\begin{proof}
To see that $M_n^0$ is the prime model of $T_n$
note that  for any flags $C_1,C_2\in M_n^0$  and gallery $C_1,=x_0,\ldots ,x_k=C_2$ the set of vertices occuring in this gallery is $E_i$-connected 
for all $i$. Hence by Remark~\ref{r:qftp} its type is determined by the
quantifier-free type.

Thus, given a maximal flag $M$ in any model of $T_n$ and  a maximal  flag  $c_0$ of $M_n^0$ we can embed $M_n^0$ into $M$ by moving along the galleries of $M_n^0$.
\end{proof}

\section{Ranks and types}\label{sec:types}

Recall that for vertices $x,y\in M_n^0$ with $x\in V_i,y\in V_j$ the \emph{Weyl-distance} $\delta(x,y)$ equals $w\in W$ if there are flags $C_1,C_2$ containing $x,y$, respectively, with 
$\delta(C_1,C_2)=w'$ and such that $w$ is the shortest representative of the
double coset $\langle t_k\colon k\neq i\rangle^W w'\langle t_k\colon k\neq j\rangle^W$ (where  as usual $\langle X\rangle^W$ denotes the subgroup of $W$ generated by $X$). 

The following is clear:
\begin{proposition}\label{p:qe}
The theory $T_n$ has quantifier elimination in a language containing predicates
$\delta_w^{i,j}$ for Weyl distances between vertices of type $V_i$ and of type $V_j$.
\end{proposition}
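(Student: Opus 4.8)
The plan is to derive quantifier elimination from the structural description of $\cl$ and forking already obtained. Write $T_n'$ for $T_n$ in the expanded language $L_n'=L_n\cup\{\delta_w^{i,j}\}$, where for $x\in V_i$ and $y\in V_j$ the predicate $\delta_w^{i,j}(x,y)$ is interpreted as ``$w$ is the reduced word recording the combinatorial type of a reduced path from $x$ to $y$'', with no $\delta_w^{i,j}$ holding when $x$ and $y$ lie in different connected components. Since $T_n$ is complete, so is $T_n'$, and by the usual back-and-forth criterion for quantifier elimination it suffices to show: if $M,N$ are $\omega$-saturated models, $\bar a$ from $M$ and $\bar b$ from $N$ satisfy $\qftp_{L_n'}(\bar a)=\qftp_{L_n'}(\bar b)$, then every $c\in M$ has a counterpart $d\in N$ with $\qftp_{L_n'}(\bar ac)=\qftp_{L_n'}(\bar bd)$. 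First I would record the ingredients: by Lemma~\ref{l:K-sat} both $M$ and $N$ are $\K_n$-saturated; by Lemma~\ref{l:acl} and Proposition~\ref{l:treeacl} the sets $\cl(\bar a)$ and $\cl(\bar ac)$ are finite, being finite unions of the algebraic closures of pairs; and (as one checks from Lemma~\ref{l:acl} together with Remark~\ref{r:qftp}, using that the algebraic closure of a pair is a ``reduced-path'' configuration and that niceness is preserved under the relevant gluings) the sets $\cl(\bar a)\subseteq\cl(\bar ac)$ are nice, hence strong, and lie in $\K_n$.

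The heart of the argument is a reconstruction lemma: \emph{the quantifier-free $L_n'$-type of a finite tuple $\bar a$ determines, and is determined by, the $L_n$-isomorphism type of the structure $\cl(\bar a)$ together with the distinguished tuple $\bar a$.} For the ``determines'' direction, Lemma~\ref{l:acl} identifies, for a single pair $a_i,a_j$, the set $\cl(a_ia_j)$ with the change-of-direction vertices on reduced paths from $a_i$ to $a_j$; the combinatorial pattern of all such reduced paths between two vertices of fixed types $V_i,V_j$ is exactly what is coded by the reduced word $w$ with $\delta_w^{i,j}(a_i,a_j)$, so $\qftp_{L_n'}(a_ia_j)$ pins down $\cl(a_ia_j)$ up to $L_n$-isomorphism over $\{a_i,a_j\}$. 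By Proposition~\ref{l:treeacl} we have $\cl(\bar a)=\bigcup_{i,j}\cl(a_ia_j)$, so only the way these pieces overlap and connect remains to be fixed; but the proof of Proposition~\ref{l:treeacl} shows that these overlaps, and the $E_k$-paths between the pieces, are again governed by reduced-path combinatorics among $a_i,a_j,a_k$, hence by the Weyl distances $\delta(a_i,a_j),\delta(a_i,a_k),\delta(a_j,a_k)$ recorded in $\qftp_{L_n'}(\bar a)$. The converse is immediate, since for $x,y\in\cl(\bar a)$ the value $\delta(x,y)$ is read off from $\cl(xy)\subseteq\cl(\bar a)$.

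Granting the reconstruction lemma, the back-and-forth step is routine. Given $c\in M$, put $D:=\cl(\bar ac)$ and $B:=\cl(\bar a)$, so $B\leq D$ in $\K_n$. Applying the reconstruction lemma to $\bar b$ yields an $L_n$-isomorphism $B=\cl(\bar a)\to\cl(\bar b)=:B'$ sending $\bar a$ to $\bar b$, and $B'\leq N$; since $N$ is $\K_n$-saturated we may extend $B'$ to a strong copy $D'\cong_{B}D$ inside $N$. Taking $d\in N$ to be the image of $c$, Remark~\ref{r:qftp} gives $\qftp_{L_n}(\bar ac)=\qftp_{L_n}(\bar bd)$ (the $L_n$-type of a nice set being determined by its quantifier-free $L_n$-type), and since every Weyl distance among $\bar ac$ is computed inside $D\cong D'$ we conclude $\qftp_{L_n'}(\bar ac)=\qftp_{L_n'}(\bar bd)$, as required.

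The step I expect to be the main obstacle is the reconstruction lemma, specifically the claim that the way the pairwise algebraic closures $\cl(a_ia_j)$ are glued together inside $\cl(\bar a)$ — which vertices they share, and which $E_k$-paths connect them — is entirely determined by the family of pairwise Weyl distances. Making this precise requires the same careful bookkeeping of reduced paths that underlies the proofs of Lemma~\ref{l:acl} and Proposition~\ref{l:treeacl}, and it is here, together with the verification that $\cl$ of a finite set is nice, that the real work lies.
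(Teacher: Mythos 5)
Your argument pivots on the claim that $\cl(\bar a)$ and $\cl(\bar ac)$ are nice, hence strong and in $\K_n$, so that Remark~\ref{r:qftp} and $\K_n$-saturation can be applied to them directly. That claim is false. Take $a\in V_0$ and $b\in V_2$ incident, i.e.\ joined through some vertex of $V_1$. By Lemma~\ref{l:acl}, $\cl(ab)=\{a,b\}$ (the pair is a flag), and no intermediate $V_1$-vertex is algebraic, so $a$ and $b$ are connected in the model but by no path inside $\cl(ab)$: condition 2 of niceness fails. The same happens whenever a reduced path between elements of the tuple contains a stretch that does not change direction (a dense flag), since the interior vertices of such stretches are never algebraic. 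This breaks the proposal in two places. First, the ``is determined by'' half of your reconstruction lemma is wrong: the induced $L_n$-graph on $\cl(\bar a)$ cannot recover the Weyl distances --- in the example the induced graph on $\{a,b\}$ is edgeless and identical to that of a totally disconnected pair, which is exactly the information the predicates $\delta_w^{i,j}$ are there to supply. Second, in the back-and-forth step you cannot invoke $\K_n$-saturation to embed $D=\cl(\bar ac)$ strongly over $B'$: $D$ is not strong in $M$, and an embedding of the abstract graph $D$ need not even preserve incidence among its vertices (the image of the incident pair $a,b$ could land in different connected components).

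The repair has to go through nice hulls rather than algebraic closures: by Lemma~\ref{l:strongext} choose a finite nice set $\hat D\supseteq \bar ac$, and the real work --- which your proposal defers to ``the same careful bookkeeping'' --- is to show that the quantifier-free $L_n'$-type of $\bar a$ determines the quantifier-free $L_n$-type of a suitable nice hull of $\bar a$ together with the tuple (equivalently, that the pairwise Weyl distances determine, up to isomorphism, the configuration of reduced paths, or galleries, joining the $a_i$). Once that is established, Remark~\ref{r:qftp} and $\K_n$-saturation of $\omega$-saturated models (Lemma~\ref{l:K-sat}) do finish the back-and-forth along the lines you describe; this is in effect what the paper treats as immediate from the double-coset description of $\delta$ and the gallery argument used for the prime model. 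As written, however, the niceness claim and the reconstruction lemma are incorrect, and the step carrying the actual content of the proposition is missing.
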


For any small set $A$ in a large saturated model we have the following kinds of regular types: 

\begin{enumerate}
\item[(I)] $\tp(a/A)$ where $a\in V_i$ is not connected to any element in $\cl(A)$
\item[(II)]$\tp(a/A)$ where $a\in V_i$ is incident with some $b\in\cl(A)\cap V_j$ but not connected in $R(b)$ to any vertex in $\cl(A)\cap R(b)$.
\item[(III)] $\tp(a/A)$ where $a\in V_i$ is incident with some $x,y\in \cl(A)$ such that $(x,a,y)$ is a flag with $x\in V_k,y\in V_j$; and as a special case of this we have
\item[(IV)] $\tp(a/A)$ where $a\in V_i$ has neighbours $x,y\in \cl(A)$ such that $(x,a,y)$ is a (necessarily dense) flag.
\end{enumerate}

By quantifier elimination any of these descriptions determines a complete type.
Using the description of forking in Corollary~\ref{c:forking} one sees easily that each of these types is regular and trivial.

Clearly, any type in (IV) has $U$-rank $1$ and in fact Morley rank $1$ by quantifier elimination. It also follows easily that $\MR(a/A)=\omega^n$ if
$\tp(a/A)$ is as in (I). In case (II) we find that  $\MR(a/A)=\omega^{n-j-1}$ 
or $\MR(a/A)=\omega^{j-1}$ depending on whether or not $i<j$.
In case (II) we have  $\MR(a/A)=\omega^{|k-j|-2}$.

Just as in \cite{BP} we obtain:
\begin{lemma}
Any regular type in $T_n$ is non-orthogonal to a type as in (I), (II), or (III).
\end{lemma}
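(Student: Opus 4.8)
The plan is to follow the strategy indicated by the phrase ``just as in \cite{BP}'': show that every regular type is non-orthogonal to a type over $\acl(\emptyset)$ of one of the four listed forms, and then observe that the type (IV) is subsumed by (III), so that in fact (I), (II), (III) suffice. The starting point is that $T_n$ is trivial and superstable with weak elimination of imaginaries, and by Corollary~\ref{c:forking} and the description of forking in Remark~\ref{r:superstable}, a stationary type $\tp(a/A)$ forks over a set $C$ exactly when $\proj(a/AC)\not\subseteq\acl(C)$. Thus the geometry of forking is controlled entirely by connectivity and projections in the graph, and domination/non-orthogonality can be read off combinatorially.

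First I would reduce to a single vertex: by Proposition~\ref{l:treeacl} every element of $\acl(A)$ lies in $\acl(ab)$ for some $a,b\in A$, and any tuple is interalgebraic with the vertices it involves together with the finitely many direction-changing vertices along reduced paths between them; so any regular type is non-orthogonal to a regular type $\tp(a/A)$ concentrated on a single vertex $a\in V_i$ (using that $T_n$ is trivial, hence all regular types have trivial geometry and non-orthogonality is transitive on the relevant class). Next, given such $p=\tp(a/A)$, I would look at $\proj(a/A)$, which by the Proposition defining it is a flag $C\subseteq\acl(A)$. If $C=\emptyset$, then $a$ is not connected to $\acl(A)$, so $a\indep{}{}{}A$ over $\emptyset$ and $p$ is (after moving $A$ to $\acl(\emptyset)$, i.e. taking the nonforking extension) of type (I). If $C$ consists of a single vertex $b$ incident with $a$ but $a$ is not connected to anything of $\acl(A)\cap R(b)$ inside $R(b)$, then — working in the residue $R(b)$, which by $(\Sigma 2)$ is again a free pseudospace of dimension $n-1$, so the inductive picture applies — $p$ is non-orthogonal to a type of form (II). Finally, if $C$ contains two distinct vertices $x,y$ with $(x,a,y)$ a flag and both in $\acl(A)$, then $p$ is non-orthogonal to a type of form (III) (with (IV) the special case $x\in V_{i-1},y\in V_{i+1}$).

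The key mechanism throughout is: if $\proj(a/A)\supsetneq C_0$ for some subflag $C_0$ realized already over $\acl(\emptyset)$-conjugate data, we can peel off part of the reduced path from $a$ witnessing the projection and replace $A$ by the finite flag $\proj(a/A)$ (using weak elimination of imaginaries, the canonical base of $\stp(a/A)$ is that flag), and then the type of $a$ over this flag is, by quantifier elimination (Proposition~\ref{p:qe}), determined by the Weyl distances $\delta(a, c)$ for $c$ in the flag; these fall into exactly the four cases above according to how many incidences $a$ has with the flag and whether $a$ is connected inside the relevant residue. Non-orthogonality of the original type to one of these ``canonical'' types follows because $\tp(a/A)$ is dominated by $\tp(a/\proj(a/A))$ over $A$ (the projection carries all the forking information, by Corollary~\ref{c:forking}), and a type over a flag $C$ is itself analyzable, via the residues and the free-pseudoplane/free-pseudospace decomposition in $(\Sigma 1)_n$–$(\Sigma 2)_n$, in terms of types of strictly ``lower complexity'' of the same four shapes, so induction on $n$ (and on the length of reduced paths) closes the argument.

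The main obstacle I expect is the bookkeeping in the induction: making precise that a type of form (III) living ``between'' $x\in V_k$ and $y\in V_j$ with $|k-j|\ge 2$ is genuinely captured by the same list when one passes to the subgraph $V_{k}\cup\dots\cup V_j$ (a free pseudospace of dimension $|k-j|$ by the first noted consequence of the axioms) rather than producing some genuinely new family; and similarly ensuring that when $\proj(a/A)$ is a longer flag, domination really does reduce matters to a single one of the four types rather than to a product, which uses triviality of the geometry in an essential way. Concretely, one must check that for a flag $C=(c_0,\dots,c_m)$ with $a$ incident to all of them, $\tp(a/C)$ is non-orthogonal to $\tp(a/c_s c_t)$ for the pair $c_s,c_t$ of neighbours of $a$ in $C$ (or to $\tp(a/c_s)$ in case (II)), which is where Lemma~\ref{l:acl}'s description of $\acl(c_sc_t)$ and the convexity/reducedness properties in Remark~\ref{r:reducedpath} do the work. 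Since the authors explicitly say this is ``similar to \cite{BP}'' and omit details, I would present the proof at this level: set up the four cases via the projection flag, invoke domination by the canonical base, and run the induction on dimension using $(\Sigma 1)_n$–$(\Sigma 2)_n$ to reduce (II) and (III) to lower-dimensional instances, finally noting (IV)$\subseteq$(III).
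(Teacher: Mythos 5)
There is a genuine gap, and it sits exactly where the paper's own (very short) proof does its one piece of real work. After reducing to a single vertex $a$ and forming the flag $C=\proj(a/A)$, you split into: $C=\emptyset$ (type (I)); $C$ a single vertex \emph{incident} with $a$ (type (II)); $C$ containing $x,y$ with $(x,a,y)$ a flag (type (III)). But nothing forces the realization of a type to be incident with its projection flag: $a$ may be connected to $\cl(A)$ only by long reduced paths all of whose direction changes lie outside $\cl(A)$ (for instance the projection can consist of a single vertex of the \emph{same} level as $a$, to which $a$ is never incident in the sense of Definition~\ref{d:incidence}). In that situation $\tp(a/\proj(a/A))$ is not of any of the shapes (I)--(IV), so your claim that ``these fall into exactly the four cases above according to how many incidences $a$ has with the flag'' is false as stated, and the subsequent induction on dimension has nothing to latch onto. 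One cannot repair this by arguing that such types are not regular, since that would require precisely the classification the lemma is meant to feed.

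The paper's proof sidesteps this by not analyzing $\tp(b/C)$ at all: given $p=\tp(b/\cl(B))$ with $C=\proj(b/B)\neq\emptyset$, it chooses an \emph{auxiliary} vertex $a$ on a (reduced) path from $b$ to $C$ which \emph{is} incident with an element of $C$, and then uses Corollary~\ref{c:forking} to see that $\depen{b}{\cl(B)}{a}$ while $\tp(a/\cl(B))$ does not fork over $C$; hence $p$ is non-orthogonal to $\tp(a/C)$, and this latter type is of shape (II) or (III) by construction of $a$. That single ``move to a nearby incident vertex'' is the idea missing from your proposal; once it is in place, no induction on the dimension, no residue analysis, and no appeal to quantifier elimination or Weyl distances is needed (your reduction from tuples to a single vertex and the observation that $\tp(a/\cl(A))$ is the nonforking extension of $\tp(a/\proj(a/A))$ are fine, and are also implicit in the paper). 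I would rewrite the middle of your argument accordingly and drop the inductive machinery.
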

\begin{proof}
Let $p=\tp(b/\cl(B))$. If $b$ is not connected to $\cl(B)$, then $p$ is
as in (I), so we may assume that $\proj(b/B)=C\neq\emptyset$. Let $a$ be 
a vertex on a short path from $b$ to $C$ incident with an element of $C$.
Then by Corollary~\ref{c:forking} we see that $p$ is non-orthogonal to
$\tp(a/C)$ and $\tp(a/C)$ is of type (II) or (III).
\end{proof}

\vspace{4mm}

\vspace{.5cm}
\noindent\parbox[t]{15em}{
Katrin Tent,\\
Mathematisches Institut,\\
Universit\"at M\"unster,\\
Einsteinstrasse 62,\\
D-48149 M\"unster,\\
Germany,\\
{\tt tent@math.uni-muenster.de}}

\end{document}